\title{A cobordism model for Waldhausen $K$-theory}
\author{George Raptis}
\address{\newline
G. Raptis \newline
Fakult\"{a}t f\"ur Mathematik \\
Universit\"{a}t Regensburg \\
D-93040 Regensburg, Germany}
\email{georgios.raptis@ur.de}
\author{Wolfgang Steimle}
\address{\newline
W. Steimle \newline 
Institut f\"ur Mathematik\\
Universit\"at Augburg\\
D-86135 Augsburg, Germany}
\email{wolfgang.steimle@math.uni-augsburg.de}
\DeclareMathOperator{\id}{id}
\newcommand{\set}{\mathrm{Set}}
\newcommand{\sset}{\mathrm{SSet}}
\newcommand{\Cat}{\mathrm{Cat}}
\newcommand{\op}{^{\mathrm{op}}}
\newcommand{\fibr}[2]{\begin{pmatrix} {#1} \\ \downarrow \\ {#2}\end{pmatrix}}
\newcommand{\Rfd}{\mathcal{R}^{hf}}
\newcommand{\F}{\mathsf{F}}
\begin{document}

\theoremstyle{plain}
\newtheorem{thm}{Theorem}
\newtheorem{cor}[thm]{Corollary}
\newtheorem{lem}[thm]{Lemma}
\newtheorem{prop}[thm]{Proposition}
\newtheorem{claim}[thm]{Claim}
\theoremstyle{definition}
\newtheorem{defn}[thm]{Definition}
\newtheorem{obs}[thm]{Observation}
\newtheorem{constr}[thm]{Construction}

\theoremstyle{remark}
\newtheorem{rem}[thm]{Remark}
\newtheorem*{rem*}{Remark}

\newtheorem{exa}[thm]{Example}

\setcounter{secnumdepth}{2}
\numberwithin{thm}{section}

\SelectTips{eu}{10}
\renewcommand{\theenumi}{\roman{enumi}}
\renewcommand{\labelenumi}{\textup{(}\theenumi\textup{)}}
\renewcommand{\theequation}{\arabic{equation}}

\newcommand{\sd}{\operatorname{\mu}}
\newcommand{\Sd}{\operatorname{Sd}}

\newcommand{\Ar}{\operatorname{Ar}}
\newcommand{\TAr}{\widetilde{\Ar}}
\newcommand{\BM}{\mathrm{BM}}
\newcommand{\cob}{\operatorname{\mathsf{Cob}}}
\newcommand{\cobbig}{\cob^{\mathrm{big}}}
\newcommand{\cobpush}{\cob^{\mathrm{po}}}
\newcommand{\cobcof}{\cob^{\mathrm{sym}}}

\newcommand{\calc}{\mathcal C}

\begin{abstract}
We study a categorical construction called the \emph{cobordism category}, which associates to each Waldhausen category a simplicial category of cospans. We prove that this construction 
is homotopy equivalent to Waldhausen's $S_{\bullet}$-construction and therefore it defines a model for Waldhausen $K$-theory. As an example, we discuss this model for $A$-theory and show 
that the cobordism category of homotopy finite spaces has the homotopy type of Waldhausen's $A(*)$. We also review the canonical map from the cobordism category of manifolds to 
$A$-theory from this viewpoint.
\end{abstract}

\maketitle


\section{Introduction} 

Many of the definitions of higher algebraic $K$-theory are fundamentally based on categorical constructions. Some of the main categorical constructions are Quillen's original 
$Q$-construction for exact categories \cite{Qu}, the closely related $s_{\bullet}$-construction, Waldhausen's more general $S_{\bullet}$-construction for Waldhausen categories \cite{Wa}, and 
Thomason's $\mathcal{T}_{\bullet}$-construction. To this incomplete list, we may also add the Gillet-Grayson $G$-construction and Quillen's $S^{-1}S$-construction. Moreover, 
$\infty$-categorical versions of such constructions have also been developed in recent years (see, for exa\-mple, \cite{Ba}).  Where it applies, each of these constructions leads to the 
same (= homotopy equivalent) definition of higher algebraic $K$-theory.  

In this paper we introduce a new such construction for Waldhausen categories. The construction applies, in fact, more generally to unpointed Waldhausen categories where neither an initial nor 
a terminal object is required. This construction is obtained from categories of cospans where one of the arrows is a cofibration. We call this construction the \emph{cobordism category} partly because our inspiration for this construction came from previous work \cite{RS1, RS2} on the relation between the cobordism category of manifolds \cite{GMTW, Ge} with the algebraic $K$-theory of spaces ($A$-theory) \cite{Wa}. Moreover, specifically in this context, the idea to regard a cospan of homotopy finite spaces as a kind of formal cobordism seems particularly illuminating. Conversely, this connection also inspired in \cite{Steimle_add} the use of $K$-theoretic methods in the study of cobordism categories of manifolds.
The main result of the present paper compares this cobordism category construction with the $S_{\bullet}$-construction and shows that the loop space of the classifying space of this cobordism category is homotopy equivalent to Waldhausen $K$-theory.

The cobordism category construction is reminiscent of the $Q$-construction \cite{Qu, Ba}. Each one is related to the $S_{\bullet}$-construction  via Segal's edgewise subdivision 
\cite{Se}, but in ``opposite" ways. The cobordism category can be compared with the $Q$-construction directly by taking (homotopy) pullbacks/pushouts in order to 
exchange a sequence of cospans with a sequence of spans. However, our proof that the cobordism category models Waldhausen $K$-theory does not require a biWaldhausen category structure 
and it applies to all Waldhausen categories. Thus, one could consider our results as a proof that a variation of the $Q$-construction yields the correct homotopy type even in cases which are not stable or additive. In future work, we plan to use the cobordism category model in the comparison 
between cobordism categories of manifolds and the algebraic $K$-theory of spaces, following the work initiated in \cite{RS_hcob}.

In this direction, we also discuss in the present paper models for $A$-theory based on the cobordism category construction. More specifically, we prove in Theorem \ref{model-A-theory2} that the algebraic $K$-theory $A(X)$ of a 
space $X$ can be obtained from the classifying space of a category of formal cobordisms between homotopy finite spaces with a structure map to $X$. Using this cobordism model for $A(X)$, 
we can describe the map from the standard cobordism category of manifolds to Waldhausen's $A$-theory essentially as an inclusion of cobordism categories. 

\medskip

The paper is organized as follows. In Section 2, we define the cobordism category of an unpointed Waldhausen category and discuss some of 
its properties. As the construction uses Segal's edgewise subdivision, we begin with a short review of this subdivision. In a final subsection, we discuss a variant 
of the cobordism category where each cospan consists of ``disjoint'' cofibrations. In Section 3, we 
define a comparison map from the cobordism category to the $S_{\bullet}$-construction. Our main result (Theorem \ref{main}) shows that this induces 
a homotopy equivalence after geometric realization. We also explain that the cobordism category construction can be iterated so that it can also be 
used to obtain the deloopings of Waldhausen $K$-theory.  In Section 4, we discuss cobordism category models for $A$-theory and give a definition of the map 
from the standard cobordism category of manifolds to Waldhausen's $A$-theory using this model.

\medskip

\thanks{\noindent \emph{Acknowledgements.} We warmly acknowledge the support and the hospitality of the Hausdorff Institute for Mathematics in Bonn where 
a preliminary outline of this work 
was completed. The first named author was partially supported by \emph{SFB 1085 --- Higher Invariants} (University of Regensburg) funded by the DFG. The second named author was partially supported by the DFG priority programme \emph{SPP 2026 --- Geometry at infinity}.

\section{The cobordism category of a Waldhausen category}

\subsection{Edgewise subdivision} We recall Segal's edgewise sudivision $\Sd(X)$ of a simplicial set $X$ (see also \cite[Appendix 1]{Se}). Let 
$\Delta$ denote the usual category of finite ordinals $[n] = \{0 < 1 < \cdots < n\}$ and order-preserving maps. Let $(-)\op \colon \Delta \to \Delta$ be the standard involutive functor with $[n]\op = [n]$ and 
$$\alpha\op(k) = n - \alpha(m-k)$$
for each $\alpha \colon [m] \to [n]$ in $\Delta$. Then we define a functor 
$$\sd \colon \Delta \xrightarrow{(-)\op \times \mathrm{id}} \Delta \times \Delta \xrightarrow{- \ast -} \Delta$$
where $\ast$ denotes the ordinal sum. It will be convenient to represent
the well-ordered finite set $\sd[n] = [n]\op \ast [n] = [2n+1]$ as follows: 
$$\{\overline n < \overline{n-1} < \dots < \overline 0 < 0 < \dots < n\}.$$
Given a simplicial set $X \colon \Delta\op \to \set$, the edgewise subdivision is defined by 
$$\Sd(X): \Delta\op \xrightarrow{\sd\op} \Delta\op \xrightarrow{X} \set.$$
More specifically, we have that $(\Sd X)_n= X_{2n+1}$ and the simplicial operators of $\Sd(X)$ are determined by $X$ using 
the presentation of $[2n+1]$ shown above. Clearly this defines an endofunctor on the category of simplicial sets,
$$\Sd \colon \sset \to \sset,$$ 
which is induced by the endofunctor $\mu$. 

When $X$ is the nerve of a small category $\calc$, then $\Sd(X)$ is also the nerve of a category, namely, the \emph{twisted arrow category} $\mathrm{tw} \ \calc$ of $\calc$. Its objects are the arrows $c\to d$ in $\calc$, and a morphism from $c \to d$ to $c' \to d'$ is a commutative square in $\calc$
\[\xymatrix{
c \ar[r] & d \ar[d]\\
c' \ar[u] \ar[r] & d'.
}\]
The composition in $\mathrm{tw}\ \calc$ is given by concatenating such squares vertically, then composing vertical arrows and forgetting
the intermediate horizontal arrow.

\medskip

There is a canonical homeomorphism $|X| \cong |\Sd(X)|$ which, however, does not arise from a simplicial map, and will not be used in this paper. We will instead use 
a natural comparison map between simplicial sets called the last-vertex map 
$$L \colon \Sd(X) \to X$$ 
which is induced by the canonical inclusions $[n]\subset [n]\op \ast [n]$ for each $[n] \in \Delta$. In the case of (nerves of) categories, the last-vertex map corresponds to the functor $\mathrm{tw}\ \calc \to \calc$ which sends $(c\to d)$ to $d$ and $(c\to d) \to (c' \to d')$ to $d \to d'$. 

The last-vertex map $L \colon \Sd(X) \to X$ is known to be a natural weak equivalence of simplicial sets. (A proof of this claim can be given using the following standard method. 
Let $\mathcal E$ denote the class of simplicial sets $X$ such that $L\colon \Sd(X)\to X$ is a weak equivalence. We first note that $\Delta^n$, $n \geq 0$, is in $\mathcal E$, for 
$\Sd(\Delta^n)$ is the nerve of the category $\mathrm{tw}[n]$ and this has a terminal object. Moreover, the functor $\Sd$ commutes with colimits and it preserves monomorphisms. 
It follows that $\mathcal E$ is closed under pushouts along a monomorphism and under directed colimits of simplicial sets. Lastly, using the skeletal filtration of a simplicial set,
we conclude that the class $\mathcal E$ contains all simplicial sets.)

\subsection{Definition of the cobordism category}
For the definition and the basic properties of the cobordism category construction, it will suffice to work with an unpointed version of the notion of a Waldhausen category \cite{Wa}. 
By an \emph{unpointed Waldhausen category} we mean a small category $\calc$ equipped with subcategories of cofibrations $co\calc \subseteq \calc$ and of weak equivalences 
$w \calc \subseteq \calc$ that satisfy the following axioms (compare \cite{Wa}):
\begin{itemize}
 \item[(1)] Both $co \calc$ and $w \calc$ contain the isomorphisms in $\calc$.
 \item[(2)] For any morphism $C\to X$ and any cofibration $C\rightarrowtail D$, the pushout $X\cup_C D$ exists in $\calc$, and the induced map $X\to X\cup_C D$ is again a cofibration.
 \item[(3)] The weak equivalences satisfy the glueing lemma \cite[p.~326]{Wa}. 
\end{itemize}
This list contains exactly those of the axioms for a Waldhausen category that do not involve a zero object. An \emph{exact functor between unpointed Waldhausen categories} $F \colon \calc \to \calc'$ is 
a functor which preserves cofibrations, weak equivalences, and those pushout squares of the form described in axiom (2) above.

Clearly every Waldhausen category is an unpoined Waldhausen category. On the other hand, the categories of (unpointed) finite sets, and of (unbased) homotopy finite spaces are not 
pointed and hence do not underlie a Waldhausen category, but do admit the structure of an unpointed Waldhausen category. The latter example, and generalizations thereof, will be 
discussed in Section \ref{sec:models_for_A_theory}.

\medskip

Let $\calc$ be an unpointed Waldhausen category and let $\Cat$ denote the category of small categories. We define a simplicial category $\cob(\calc)_{\bullet} \colon \Delta\op \to \Cat$ as follows. The 
category $\cob(\calc)_n$ is the category of functors
\[
F \colon \mathrm{tw}[n]\to \calc, \quad (i \leq j)\mapsto F_{ij}
\]
such that
\begin{enumerate}
\item for each $i\leq j\leq k$, the map $F_{ij}\to F_{ik}$ is a cofibration, and 
\item for each $i\leq j\leq k\leq l$, the diagram in $\calc$
\[\xymatrix{
 & F_{il}\\
 F_{ik} \ar@{>->}[ru] && F_{jl} \ar[ul] \\
 & F_{jk} \ar@{>->}[ru] \ar[ul] &
}\] 
is a pushout square.
\end{enumerate}
The morphisms in $\cob(\calc)_n$ are the natural transformations between such functors. We denote $w \cob(\calc)_n \subset \cob(\calc)_n$ the subcategory of objectwise weak equivalences. 

Thus, $\cob(\calc)_0 = \calc$ and $w\cob(\calc)_0 = w \calc$. The category $\cob(\calc)_1$ (resp., $w\cob(\calc)_1$) consists of diagrams in $\calc$ of the form
\[
\xymatrix{
&F_{01} &\\
F_{00} \ar@{>->}[ur] & & F_{11} \ar[ul] \\
}
\]
and natural transformations (resp., natural weak equivalences) between them. We find it useful to think of such a diagram as some kind of (formal) cobordism from $F_{00}$ to $F_{11}$. In the next simplicial degree an object in $\cob(\calc)_2$ can be depicted as follows:
\[
\xymatrix{
&& F_{02} &&\\ 
& F_{01} \ar@{>->}[ur] && F_{12} \ar[ul] & \\
F_{00} \ar@{>->}[ur] & & F_{11} \ar@{>->}[ur] \ar[ul] && F_{22} \ar[ul] \\
}
\]
where the middle square is a pushout, and similarly for the higher simplicial degrees. In the language of cobordisms, the last diagram corresponds to the datum of two composable cobordisms
together with a choice of a composition. Continuing this analogy, an object in $\cob(\calc)_n$ corresponds to a string of $n$ many composable cobordisms, together with choices of compositions for all connected substrings. 

Both $\cob(\calc)_n$ and $w \cob(\calc)_n$ are natural in $[n]$ and define simplicial objects in the category of small categories $\Cat$. Moreover, $\cob(\calc)_{\bullet}$ also defines
a simplicial object in the category of unpointed Waldhausen categories and exact functors. (We will return to this point in Subsection \ref{deloopings}.)
The simplicial category $\cob(\calc)_{\bullet}$ (resp. $w\cob(\calc)_{\bullet}$) is a type of cospan category with 
restrictions, as imposed by condition (i).


\begin{defn}
The \emph{cobordism category} of the unpointed Waldhausen category $\calc$ is the simplicial space
$$ \cob(\calc, w\calc) \colon [n] \mapsto \big\vert N_{\bullet} w\cob(\calc)_n \big\vert.$$
The geometric realization of this simplicial space is called the \emph{classifying space} of the cobordism category and will be 
denoted by $\mathsf{B}\cob(\calc, w\calc)$. 
\end{defn}

The definition of the cobordism category is clearly functorial with respect to exact functors between (unpointed) Waldhausen categories. 

\begin{rem}
It is easy to see that $w\cob(\calc)$ defines a Segal object in $\Cat$, in the sense that the canonical restriction along the spine inclusion
$$w \cob(\calc)_n \xrightarrow{\simeq} \underbrace{w \cob(\calc)_1 \times_{w \cob(\calc)_0} \cdots \times_{w \cob(\calc)_0}
w\cob(\calc)_1}_{n}$$
is an equivalence of categories for each $n \geq 1$. Since geometric realization commutes with pullbacks, the cobordism category is also a 
Segal object in (a convenient model for) the category of spaces. This provides some justification of the term cobordism \emph{category}. 
However, we do not know at this level of generality whether $\cob(\calc, w\calc)$ is a Segal space in the usual sense -- that is, if additionally the iterated 
pullbacks
\[
 \vert w \cob(\calc)_1\vert \times_{\vert w \cob(\calc)_0\vert } \cdots \times_{\vert w \cob(\calc)_0\vert }
\vert w\cob(\calc)_1 \vert
\]
are also homotopy pullbacks. (This is true, for instance, if $w\calc$ is the class of isomorphisms in $\calc$.) 
\end{rem}


\subsection{Relative isomorphisms} Our main goal is to establish an equivalence between the cobordism category construction and Waldhausen's 
$S_{\bullet}$-construction in the case of a standard Waldhausen category. This will be done in Section \ref{section:comparison}. 
The main ingredient in the proof of this equivalence is the 
following useful method for showing that certain functors between cobordism categories induce homotopic maps between the classifying spaces.

Let $\calc$ be an unpointed Waldhausen category. We denote by $\cobpush(\calc)_{\bullet} \subset \cob(\calc)^{[1]}_\bullet$ the simplicial subcategory which is given 
in degree $n \geq 0$ by the full subcategory of $\cob(\calc)^{[1]}_n$ that is spanned by the functors 
$$F \colon \mathrm{tw}[n] \times [1] \to \calc, \quad (i \leq j, a)\mapsto F_{ij}^a$$
such that, for each $i\leq j\leq k$, the diagram in $\calc$
\[\xymatrix{
F_{ij}^0 \ar@{>->}[r] \ar[d] & F_{ik}^0 \ar[d] \\
F_{ij}^1 \ar@{>->}[r] & F_{ik}^1
}\]
is a pushout square. We also denote by $w \cobpush(\calc)_{\bullet} \subset \cobpush(\calc)_{\bullet}$ the subcategory of objectwise 
weak equivalences.  Note that there are morphisms of simplicial categories, 
$$
\xymatrix{
\cob(\calc)_{\bullet} \ar[r]^(.5){\delta_{\bullet}} & \cobpush(\calc)_{\bullet} \ar@<1ex>[r]^{s_{\bullet}} \ar@<-0.5ex>[r]_{t_{\bullet}} &\cob(\calc)_{\bullet} \\
}
$$
which are induced by the obvious maps $[0] \rightrightarrows [1] \rightarrow [0]$. These morphisms also restrict to the simplicial 
subcategories of weak equivalences.

\begin{lem} \label{useful-lem}
Let $\calc$ be an unpointed Waldhausen category. 
\begin{itemize}
\item[(a)] The inclusion of simplicial sets 
$$\delta_{\bullet} \colon  \mathrm{ob}(\cob(\calc)_{\bullet})\to \mathrm{ob}(\cobpush(\calc)_{\bullet})$$
is a weak equivalence (i.e., it induces a weak homotopy equivalence after geometric realization). 
\item[(b)] The inclusion of simplicial sets
\[
\delta_{\bullet} \colon N_n w\cob(\calc)_{\bullet}  \to N_n w\cobpush(\calc)_{\bullet}
\]
is a weak equivalence for each $n \geq 0$. As a consequence, there is a 
homotopy equivalence 
$$\delta \colon \mathsf{B} \cob(\calc, w\calc) \xrightarrow{\simeq} \big \vert N_{\bullet} w \cobpush(\calc)_{\bullet} \big\vert.$$ 
\item[(c)] The source and target projections $s_{\bullet}$ and $t_{\bullet}$ induce homotopic maps 
\[
s \simeq t \colon \big \vert N_{\bullet} w\cobpush(\calc)_{\bullet} \big \vert \rightarrow  \mathsf{B} \cob(\calc, w\calc).
 \] 
\end{itemize}
\end{lem}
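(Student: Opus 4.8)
For part (a), I would work with the following unravelled description: an object of $\cobpush(\calc)_m$ is a functor $F\colon\mathrm{tw}[m]\times[1]\to\calc$, written $(i\le j,a)\mapsto F^a_{ij}$, such that $F^0,F^1\colon\mathrm{tw}[m]\to\calc$ lie in $\cob(\calc)_m$ and, for every $i\le j\le k$, the square with rows $F^0_{ij}\rightarrowtail F^0_{ik}$ and $F^1_{ij}\rightarrowtail F^1_{ik}$ and vertical maps the structure maps $F^0_{ij}\to F^1_{ij}$, $F^0_{ik}\to F^1_{ik}$ is a pushout. Taking $j=i$, each $F^1_{ik}$ is the pushout of $F^1_{ii}\leftarrow F^0_{ii}\rightarrowtail F^0_{ik}$, so $F^1$ is recovered from $F^0$ and the diagonal structure maps, and involves no new objects. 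Write $t_\bullet\colon\cobpush(\calc)_\bullet\to\cob(\calc)_\bullet$ for the projection $F\mapsto F^1$, so $t_\bullet\delta_\bullet=\mathrm{id}$. The key construction is, for each $-1\le p\le m$, a self-map $\Phi_p$ of $\mathrm{ob}(\cobpush(\calc)_m)$: the object $\Phi_p(F)$ has $a=1$ part equal to $F^1$, has $a=0$ part equal to $F^1_{ij}$ for $i\le p$ and equal to $F^0_{ij}$ for $i>p$, and has structure map the identity of $F^1_{ij}$ for $i\le p$ and the structure map of $F$ for $i>p$. Then $\Phi_{-1}=\mathrm{id}$ and $\Phi_m=\delta_m t_m$.

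The heart of the argument is to check that $\Phi_p(F)$ again lies in $\cobpush(\calc)_m$. The cofibration condition of $\cob(\calc)_m$ and the defining pushout condition of $\cobpush$ hold for $\Phi_p(F)$ by inspection (the relevant diagrams are either diagrams belonging to $F$ or trivial ones, since the recipe for $\Phi_p(F)$ depends only on the left endpoint $i$); the only point that needs an argument is that the $a=0$ part of $\Phi_p(F)$ still satisfies the pushout condition (ii) of $\cob(\calc)_m$, and there the only nontrivial case is $i\le p<j$, where it amounts to the identity $F^1_{il}\cong F^1_{ik}\cup_{F^0_{jk}}F^0_{jl}$. This I would obtain by pasting the pushout $F^1_{il}\cong F^1_{ik}\cup_{F^1_{jk}}F^1_{jl}$ (condition (ii) for $F^1\in\cob(\calc)_m$, which holds by the pushout axiom) with the pushout $F^1_{jl}\cong F^1_{jk}\cup_{F^0_{jk}}F^0_{jl}$ (itself a consequence of two of the defining squares of $\cobpush(\calc)_m$ for $F$ together with the pasting law). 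A routine verification gives $\alpha^*\Phi_p(F)=\Phi_{p'}(\alpha^*F)$ for $\alpha\colon[m']\to[m]$, where $p'=\max\{x\in[m']\mid\alpha(x)\le p\}$ (with the maximum of the empty set read as $-1$). Since a monotone map $\theta\colon[m]\to[1]$ corresponds to the integer $-1\le p\le m$ with $\theta(x)=0\Leftrightarrow x\le p$, the assignment $(F,\theta)\mapsto\Phi_p(F)$ is then a well-defined simplicial homotopy $\mathrm{ob}(\cobpush(\calc)_\bullet)\times\Delta^1\to\mathrm{ob}(\cobpush(\calc)_\bullet)$ from $\delta_\bullet t_\bullet$ to the identity; as $\Phi_p$ fixes the objects in the image of $\delta_\bullet$, it exhibits $\delta_\bullet$ as a strong deformation retract, in particular a weak equivalence, which proves (a).

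For part (b), I would deduce it from (a) applied to an auxiliary Waldhausen category. For fixed $n$, let $\calc^{[n]}$ be the category of functors $[n]\to\calc$ carrying every morphism to a weak equivalence, with objectwise cofibrations and objectwise weak equivalences; the gluing lemma shows this is again an unpointed Waldhausen category. Unwinding the definitions yields natural isomorphisms of simplicial sets $N_n w\cob(\calc)_\bullet\cong\mathrm{ob}(\cob(\calc^{[n]})_\bullet)$ and $N_n w\cobpush(\calc)_\bullet\cong\mathrm{ob}(\cobpush(\calc^{[n]})_\bullet)$, compatibly with $\delta_\bullet$, so the first assertion of (b) is exactly (a) for $\calc^{[n]}$. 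Now $\mathsf{B}\cob(\calc,w\calc)$ and $\vert N_\bullet w\cobpush(\calc)_\bullet\vert$ are the realizations of the bisimplicial sets $([n],[m])\mapsto N_n w\cob(\calc)_m$ and $([n],[m])\mapsto N_n w\cobpush(\calc)_m$, and $\delta$ is a levelwise weak equivalence in the $n$-direction by what was just shown, hence a weak equivalence after realization; this gives the homotopy equivalence $\delta$ of (b).

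Finally, part (c) is formal: by (b), $\delta$ is a weak equivalence between CW complexes, hence a homotopy equivalence, and the relations $s_\bullet\delta_\bullet=t_\bullet\delta_\bullet=\mathrm{id}_{\cob(\calc)_\bullet}$ (from the two factorizations of the identity through $[0]\to[1]\to[0]$) pass to $\mathsf{B}\cob(\calc,w\calc)$; thus $s$ and $t$ are both homotopy inverses of $\delta$, and since any two homotopy inverses of a given map are homotopic, $s\simeq t$. The one genuinely technical point in this plan is the verification in part (a) that the collapsing operations $\Phi_p$ stay within $\cobpush(\calc)_m$ — precisely where the pushout axiom (2) and the pasting law for pushouts are used; everything else (the simplicial naturality of $\Phi_p$, the reduction in (b), and the argument in (c)) is bookkeeping.
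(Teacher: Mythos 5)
Your argument is correct and follows the same strategy as the paper's: part (a) via an explicit simplicial deformation retraction, part (b) by applying (a) to the auxiliary unpointed Waldhausen category of strings of weak equivalences in $\calc$ (the paper's $w_n\calc$, which you denote $\calc^{[n]}$), and part (c) as a formal consequence. The only cosmetic difference is that the paper's homotopy retracts onto the source component $F^0$ (packaged as a composite using the first-vertex map $\mathrm{tw}[n]\to[n]^{\mathrm{op}}$ and the reverse homotopy $[1]^{\mathrm{op}}\times[1]\to[1]$), while your $\Phi_p$ retracts onto the target $F^1$ with an explicit index-by-index description; the pushout-pasting verification that the homotopy lands in $\cobpush(\calc)_\bullet$ is the same in both.
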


\begin{proof}
Note that $s_{\bullet} \circ \delta_{\bullet} =\id = t_{\bullet} \circ \delta_{\bullet}$. Thus, for (a), it is enough to specify a simplicial homotopy 
\[H \colon \mathrm{ob}(\cobpush(\calc)_{\bullet}) \times \Delta^1 \to \mathrm{ob}(\cobpush(\calc)_{\bullet})\]
between $\delta_{\bullet} \circ s_{\bullet}$ and the identity map. This homotopy sends 
$(F, \alpha \colon [n]\to [1])$ to the composite
{\footnotesize
\[
\mathrm{tw}[n] \times [1] \xrightarrow{(\id, q) \times \id} \mathrm{tw}[n] \times [n]\op \times [1] \xrightarrow{\id \times \alpha\op \times \id} \mathrm{tw}[n]\times 
[1]\op\times [1]\xrightarrow{\id \times h} \mathrm{tw}[n] \times [1]\xrightarrow{F} \calc
\]
}where the first map is induced by the first-vertex map $q \colon \mathrm{tw}[n]\to [n]\op$, given by $(i \leq j) \mapsto i$, and the map $h \colon [1]\op\times [1]\to [1]$ is the standard (reverse) homotopy between $\id$ and the constant map at $0$, i.e., $h(1, i) = 0$ and $h(0, i) = i$. 

It follows from the definition that this is a simplicial map but we need to check that it really lands in $\cobpush(\calc)_{\bullet}$. Let $G$ denote the image of $(F,\alpha)$ under $H$. 
Then for fixed $\alpha$, we have that $G_{ij}^0=F_{ij}^0$ and $G_{ij}^1$ is either $F_{ij}^1$ or $F_{ij}^0$, depending on whether $\alpha(i)$ is $0$ or $1$, respectively. From this it follows easily that for each $i\leq j\leq k$ and any $a = 0,1$, the map
\[G_{ij}^a\rightarrowtail G_{ik}^a\]
is  a cofibration. Moreover, using our assumptions on $F$, it follows that for each $i\leq j\leq k \leq l$ and any $a=0,1$,  the diagram
\[\xymatrix{
G_{jk}^a \ar@{>->}[r] \ar[d] & G_{jl}^a \ar[d]\\
G_{ik}^a \ar@{>->}[r] & G_{il}^a
}\] 
is a pushout square. Similarly, for each $i\leq j\leq k$, the diagram
\[\xymatrix{
G_{ij}^0 \ar@{>->}[r] \ar[d] & G_{ik}^0 \ar[d] \\
G_{ij}^1 \ar@{>->}[r] & G_{ik}^1
}\]
is also a pushout square. Therefore the simplicial homotopy is well-defined and (a) follows. 

(b) We apply (a) to the unpointed Waldhausen category $w_n\calc$ whose objects are $n$-strings of weak equivalences in $\calc$, and whose morphisms are natural transformations 
of diagrams. This is a full subcategory of $\calc^{[n]}$ and is regarded as an unpointed Waldhausen category with the objectwise structure. Then the claim follows since
\[
\mathrm{ob}(\cob(w_n\calc)_\bullet) \cong N_n w\cob(\calc)_\bullet, \quad
\mathrm{ob} (\cobpush(w_n\calc)_\bullet) \cong N_n w\cobpush(\calc)_\bullet. 
\]
(c) is an immediate consequence of (the second part of) (b). 
\end{proof}

\begin{rem}
It is also possible to establish an additivity theorem for the cobordism category, analogous to Waldhausen's Additivity Theorem, 
by using similar arguments and following Waldhausen's proof in \cite{Wa}. 
\end{rem}  


\begin{defn}
Let $F, G \colon \calc \to \mathcal{D}$ be exact functors between unpointed Waldhausen categories. A natural transformation $\phi \colon F \to G$ is called a \emph{relative 
isomorphism} if for each cofibration $f \colon c \rightarrowtail c'$ in $\calc$, the diagram in $\mathcal{D}$
$$
\xymatrix{
F(c) \ar@{>->}[r]^{F(f)} \ar[d]_{\phi_c} & F(c') \ar[d]^{\phi_{c'}} \\
G(c) \ar@{>->}[r]_{G(f)} & G(c') 
}
$$
is a pushout square. 
\end{defn}

As an immediate consequence of Lemma \ref{useful-lem}, we have the following proposition. 

\begin{prop} \label{key-lem2}
Let $F, G \colon \calc \to \mathcal{D}$ be exact functors between unpointed Waldhausen categories and let $\phi \colon F \to G$ be a relative isomorphism. Then the induced 
maps 
$$\mathsf{B}\cob(F, wF) \simeq \mathsf{B}\cob(G, wG) \colon \mathsf{B}\cob(\calc, w\calc) \to \mathsf{B}\cob(\mathcal{D}, w\mathcal D)$$
are homotopic. 
\end{prop}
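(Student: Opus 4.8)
The plan is to exhibit both induced maps as the composites of a single map
$\mathsf{B}\Phi \colon \mathsf{B}\cob(\calc, w\calc) \to \big\vert N_{\bullet} w\cobpush(\mathcal{D})_{\bullet}\big\vert$
with the two projections $s$ and $t$ of Lemma \ref{useful-lem}, and then to invoke the homotopy $s \simeq t$ supplied by part (c) of that lemma. So the only real work is to package the relative isomorphism $\phi$ into a map into the $\cobpush$-construction.

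First I would use $\phi$ to build a morphism of simplicial categories $\Phi_{\bullet} \colon \cob(\calc)_{\bullet} \to \cobpush(\mathcal{D})_{\bullet}$. Given an object $A \colon \mathrm{tw}[n]\to \calc$ of $\cob(\calc)_n$, with value $A_{ij}$ at $(i\leq j)$, let $\Phi_n(A) \colon \mathrm{tw}[n]\times[1]\to\mathcal{D}$ be the diagram with $\Phi_n(A)_{ij}^0 = F(A_{ij})$ and $\Phi_n(A)_{ij}^1 = G(A_{ij})$, whose restrictions to the two slices $\mathrm{tw}[n]\times\{0\}$ and $\mathrm{tw}[n]\times\{1\}$ are $F\circ A$ and $G\circ A$, and whose arrows in the $[1]$-direction are the components $\phi_{A_{ij}}$; on morphisms $\Phi_n$ is the evident postcomposition. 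I then need to check that $\Phi_n(A)$ really lies in $\cobpush(\mathcal{D})_n$. The $\{0\}$-slice is $F\circ A$, which lies in $\cob(\mathcal{D})_n$ since $F$ is exact (it preserves cofibrations and the pushout squares of condition (ii)), and likewise the $\{1\}$-slice is $G\circ A \in \cob(\mathcal{D})_n$. The additional condition defining $\cobpush$ asks, for each $i\leq j\leq k$, that the square
\[\xymatrix{
F(A_{ij}) \ar@{>->}[r] \ar[d]_{\phi_{A_{ij}}} & F(A_{ik}) \ar[d]^{\phi_{A_{ik}}} \\
G(A_{ij}) \ar@{>->}[r] & G(A_{ik})
}\]
be a pushout; but $A_{ij}\rightarrowtail A_{ik}$ is a cofibration in $\calc$ by condition (i) for $\cob(\calc)_n$, so this is exactly an instance of the relative isomorphism hypothesis on $\phi$. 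The construction is natural in $[n]$ (it commutes with precomposition by $\mathrm{tw}[\alpha]\times\id_{[1]}$), hence defines a morphism of simplicial categories, and since $F$ and $G$ preserve weak equivalences it restricts to a morphism $w\cob(\calc)_{\bullet}\to w\cobpush(\mathcal{D})_{\bullet}$. Applying $N_{\bullet}$ and geometric realization in both simplicial directions yields the desired map $\mathsf{B}\Phi$.

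Finally, by construction the source and target of $\Phi_n(A)$ are $F\circ A$ and $G\circ A$, so $s_{\bullet}\circ\Phi_{\bullet}$ and $t_{\bullet}\circ\Phi_{\bullet}$ are precisely the morphisms of (weak-equivalence) simplicial categories induced by the exact functors $F$ and $G$; hence $s\circ \mathsf{B}\Phi = \mathsf{B}\cob(F, wF)$ and $t\circ \mathsf{B}\Phi = \mathsf{B}\cob(G, wG)$. Composing the homotopy $s\simeq t$ of Lemma \ref{useful-lem}(c) with $\mathsf{B}\Phi$ gives the claim. I expect no genuine obstacle here: all the homotopy-theoretic content is in Lemma \ref{useful-lem}, and the point of the present proposition is simply that the notion of a relative isomorphism is exactly what is needed to factor the pair $(\cob(F),\cob(G))$ through $\cobpush(\mathcal{D})$; the mild bookkeeping is only in confirming that every square required by the definition of $\cobpush(\mathcal{D})_n$ — the slice conditions coming from exactness of $F$ and $G$, and the $[1]$-direction conditions coming from $\phi$ — is indeed a pushout, which is immediate from the definitions.
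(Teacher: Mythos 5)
Your argument is correct and is essentially the paper's own proof: both encode $\phi$ as a map $\Phi_{\bullet}\colon \cob(\calc)_{\bullet} \to \cobpush(\mathcal{D})_{\bullet}$ (the paper views $\phi$ as an exact functor $\calc \to \mathcal{D}^{[1]}$ and applies $\cob(-)$, which is your construction in more compact notation), observe that the relative-isomorphism condition is precisely what makes the image land in $\cobpush$, and then invoke Lemma \ref{useful-lem}(c). No discrepancy.
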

\begin{proof}
The natural transformation $\phi\colon \calc\to \mathcal D^{[1]}$ defines an exact functor of unpointed Waldhausen categories, where $\mathcal D^{[1]}$ is equipped with the objectwise structure. Then we obtain a simplicial functor 
\[\Phi_{\bullet} \colon = \cob(\phi)_\bullet\colon \cob(\calc)_\bullet\to \cob(\mathcal D^{[1]})_\bullet = \cob(\mathcal D)_\bullet^{[1]},\]
which by assumption lands in the simplicial subcategory $\cobpush(\mathcal D)_\bullet$. Moreover, $\Phi_{\bullet} \colon \cob(\calc)_{\bullet} \to \cobpush(\mathcal{D})_{\bullet}$ is 
such that $s_{\bullet} \circ \Phi_{\bullet}$ and $t_{\bullet} \circ \Phi_{\bullet}$ correspond to the simplicial functors induced by $F$ and $G$ respectively. Then 
the result follows from Lemma \ref{useful-lem}(c).
\end{proof}

\begin{exa}
Let $\calc$ be an unpointed Waldhausen category which has an initial object $\varnothing$ such that the unique morphism $\varnothing \to X$ is a cofibration for every object 
$X$ in $\calc$ (see also Section \ref{symmetric-section}). Then for any object $X$ in $\calc$, there is an exact ``shift'' functor 
$-\amalg X \colon \calc \to \calc$. The canonical natural transformation from the identity functor to this shift functor $(- \amalg X)$ is a relative isomorphism. By 
Proposition \ref{key-lem2}, it follows that the map induced on the cobordism category by the shift functor is homotopic to the identity.
\end{exa}

\subsection{A symmetric version} \label{symmetric-section}
The definition of the cobordism category construction is not symmetric under passing to the opposite category, because of the cofibration condition. 
In this subsection we present a symmetric variant of the cobordism category and show that it agrees with the original one in most cases of interest.

Let $\calc$ be an unpointed Waldhausen category which has an initial object $\varnothing$
such that the unique morphism $\varnothing \to X$ is a cofibration for each object $X \in \calc$ (so that, in particular, $\calc$ has finite coproducts). We will 
refer to such an unpointed Waldhausen category as an \emph{unpointed Waldhausen category with initial object}.

\medskip

For each $n \geq 0$, we consider the full subcategory 
$$\cobcof(\calc)_{n} \subset \cob(\calc)_{n}$$
which is spanned by the functors $F \colon \mathrm{tw}[n] \to \calc$ in $\cob(\calc)_n$ such that in addition
\begin{enumerate}
\item[(iii)] for each $0 \leq i < n$, the canonical map 
$$F_{ii} \sqcup F_{i+1, i+1} \to F_{i, i+1}$$ 
is a cofibration.
\end{enumerate}
Under this extra assumption, for each $0 \leq i \leq j \leq k \leq n$, the map $F_{jk} \to F_{ik}$ is a cofibration, too. Then $\cobcof(\calc)_{\bullet} \colon [n] \mapsto \cobcof(\calc)_n$ 
defines a semi-simplicial object in $\Cat$, i.e., a functor on the subcategory $\Delta_{<}^{\op} \subset \Delta^{\op}$ which consists of the injective maps.  (In the category--theoretic 
interpretation of $\cobcof(\calc)_\bullet$, the absence of degeneracies can be interpreted as the absence of identity morphisms.) We also have the corresponding semi-simplicial subcategory of weak equivalences 
$w \cobcof(\calc)_{\bullet} \subset \cobcof(\calc)_{\bullet}$.

\begin{rem}
The construction $\calc \mapsto \cobcof(\calc)_{\bullet}$ works also under the weaker assumption on $\calc$, namely, that a pushout of $D\leftarrow C\to X$ exists in $\calc$
when \emph{both} maps $C\to D$ and $C\to X$ are cofibrations.
\end{rem}

\begin{defn}
Let $\calc$ be an unpointed Waldhausen category with initial object. The \emph{symmetric cobordism category} of $\calc$ is the semi-simplicial space 
$$ \cobcof(\calc, w\calc) \colon [n] \mapsto \big\vert N_{\bullet} w\cobcof(\calc)_n \big\vert.$$
The geometric realization of this semi-simplicial space is called the \emph{classifying space} of the symmetric cobordism category and will 
be denoted by $\mathsf{B}\cobcof(\calc, w\calc)$.
\end{defn}

We compare this symmetric construction with the previous cobordism category construction in the case where $\calc$ has functorial factorizations. We recall that
$\calc$ is said to have functorial factorizations if every morphism in $\calc$ can be written functorially in the arrow category $\calc^{[1]}$ as the composition of a cofibration followed by a weak equivalence. For Waldhausen categories, this is equivalent to the existence of a cylinder functor satisfying 
the cylinder axiom in the sense of \cite[1.6]{Wa}.

The inclusion (of semi-simplicial categories) $w\cobcof(\calc)_{\bullet} \subset w\cob(\calc)_{\bullet}$  induces a map between the geometric
realizations of the corresponding semi-simplicial spaces. This can be composed with the canonical homotopy equivalence to the classifying space 
of $\cob(\calc, w\calc)_{\bullet}$, so we obtain a canonical map 
$$\mathsf{B} \cobcof(\calc, w\calc) \to \mathsf{B} \cob(\calc, w\calc).$$

\begin{prop} \label{refined2}
Let $\calc$ be an unpointed Waldhausen category with initial object. Suppose that $\calc$ has functorial factorizations. 
Then the inclusion map
$$\mathsf{B} \cobcof(\calc, w\calc) \to \mathsf{B} \cob(\calc, w\calc)$$
is a weak homotopy equivalence. 
\end{prop}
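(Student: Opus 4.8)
The plan is to upgrade the known simplicial-homotopy arguments from the previous subsection to a filtration argument that adds the extra cofibration condition (iii) "one coproduct summand at a time." The key point is that the inclusion $w\cobcof(\calc)_n \hookrightarrow w\cob(\calc)_n$ is, for each fixed $n$, the inclusion of a full subcategory, and I want to show it induces an equivalence after taking nerves and realizing; by the realization-of-semisimplicial-spaces lemma it then suffices to check this degreewise. (A mild care point: the source is only semisimplicial while the target is simplicial, so I will first replace $\cob(\calc,w\calc)$ by its associated semisimplicial space by forgetting degeneracies, which does not change the homotopy type of the realization.)

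First I would set up, for a fixed $n$, a functor $R_n \colon \cob(\calc)_n \to \cob(\calc)_n$ built from the functorial factorization: given $F$, replace each of the $n$ "bottom" maps $F_{ii}\sqcup F_{i+1,i+1}\to F_{i,i+1}$ by the cofibration part of its functorial (cofibration, weak equivalence)-factorization, and then propagate this change through the whole diagram using the pushout conditions (ii). Functoriality of the factorization, together with the universal properties of the pushouts defining the objects of $\cob(\calc)_n$, makes $R_n$ a well-defined functor landing in $\cobcof(\calc)_n$, and the natural weak equivalences coming from the factorization assemble into a natural transformation from $R_n$ to the identity on $\cob(\calc)_n$ which is an objectwise weak equivalence. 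This natural weak equivalence shows that $R_n$, viewed as a functor $w\cob(\calc)_n \to w\cobcof(\calc)_n$ followed by the inclusion, is homotopic to the identity on $N_\bullet w\cob(\calc)_n$ after realization, and that $R_n$ restricted to $w\cobcof(\calc)_n$ is homotopic to the identity there. Hence $|N_\bullet w\cobcof(\calc)_n| \to |N_\bullet w\cob(\calc)_n|$ is a homotopy equivalence for every $n$.

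The genuinely delicate part is the compatibility of the $R_n$ with the (semi)simplicial structure maps: a face map $d^i$ applied to the bottom-level data of an $n$-simplex in $\cob(\calc)$ does not in general send a bottom map to a bottom map — composing two adjacent cobordisms produces a map $F_{ii}\sqcup F_{i+1,i+1}\to F_{i,i+1}\to F_{i,i+2}$ that is a cofibration but whose factorization is not literally the one we chose — so the $R_n$ do not form a (semi)simplicial endomorphism on the nose. I expect this to be the main obstacle. I would handle it either by observing that, after restricting to the subcategory of weak equivalences and realizing, the relevant square of simplicial spaces commutes up to a coherent homotopy supplied by the comparison-of-factorizations machinery (any two functorial factorizations of the same map are related by a canonical weak equivalence), so that the degreewise equivalences $R_n$ glue to a map of realizations; or, more cleanly, by phrasing the whole argument in terms of the natural transformation $\mathrm{id}\Rightarrow R$ as an object-level map into the arrow category and invoking Proposition \ref{key-lem2} / Lemma \ref{useful-lem} to absorb exactly this kind of incoherence, just as was done for relative isomorphisms.

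Once the degreewise equivalences are shown to assemble into a map of semisimplicial spaces (equivalently, once we know the homotopy-commuting data is coherent enough), the realization lemma for semisimplicial spaces — a levelwise weak equivalence of (good, e.g. Reedy-cofibrant/levelwise realizations of nerves are always well-pointed) semisimplicial spaces realizes to a weak equivalence — gives that $\mathsf{B}\cobcof(\calc,w\calc) \to \mathsf{B}\cob(\calc,w\calc)$ is a weak homotopy equivalence, completing the proof. It is worth remarking that the initial-object hypothesis is used only to make condition (iii) and the coproduct $F_{ii}\sqcup F_{i+1,i+1}$ meaningful, and functorial factorization is what produces the retraction $R_n$; without factorizations one cannot expect the inclusion to be an equivalence.
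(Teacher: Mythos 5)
Your proposal follows the same overall route as the paper: pass to the underlying semi-simplicial spaces, reduce to a degreewise statement via the realization lemma, and produce a degreewise homotopy inverse $r\colon w\cob(\calc)_n \to w\cobcof(\calc)_n$ by functorially factoring the maps $F_{ii}\sqcup F_{i+1,i+1}\to F_{i,i+1}$ as a cofibration followed by a weak equivalence and then propagating the change through the diagram by pushouts, so that both composites with the inclusion are naturally weakly equivalent to the identities. That is exactly what the paper does.

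However, the ``genuinely delicate part'' you identify is not actually an obstacle, and the effort you propose to resolve it is unnecessary. The realization lemma for (good) semi-simplicial spaces is applied to the \emph{inclusion}
$$ N_\bullet w\cobcof(\calc)_\bullet \hookrightarrow N_\bullet w\cob(\calc)_\bullet, $$
which is already a map of semi-simplicial spaces by construction. To invoke the lemma one only needs to know that this map is a weak equivalence in every fixed simplicial degree $n$; the retraction functors $r=R_n$ are merely the tools used to verify this degreewise fact, and they never need to assemble into a semi-simplicial endomorphism, nor does one need any coherence between the factorizations chosen in different degrees. So the final paragraph of your argument, which insists on assembling the $R_n$ (or on absorbing the incoherence via Lemma \ref{useful-lem} or Proposition \ref{key-lem2}), addresses a non-problem. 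With that detour removed, your proof is correct and is the same as the paper's.

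Two minor points of care that you correctly flag: (a) passing from the simplicial realization of $w\cob(\calc)_\bullet$ to the semi-simplicial one is legitimate (the paper builds this into the definition of the comparison map), and (b) one should check, as you do implicitly, that the pushout-propagation step lands in $\cobcof(\calc)_n$, which uses that cobase change preserves cofibrations.
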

\begin{proof}
It suffices to prove that the inclusion of semi-simplicial categories induces a homotopy equivalence in each simplicial degree of the cobordism direction. Using the functorial factorizations, we can functorially replace 
each cospan 
\[
\xymatrix{
&F_{01} &\\
F_{00} \ar@{>->}[ur] & & F_{11} \ar[ul] \\
}
\]
by a weakly equivalent cospan 
\[
\xymatrix{
& \overline{F}_{01} &\\
F_{00} \ar@{>->}[ur] & & F_{11} \ar@{>->}[ul] \\
}
\]
where $F_{00} \sqcup F_{11} \rightarrowtail \overline{F}_{01} \xrightarrow{\sim} F_{01}$ is the functorial factorization.
Repeating this process and making choices of pushouts, we obtain 
a functor 
$$r \colon w \cob(\calc)_n \to w \cobcof(\calc)_n$$
such that both composites 
$$w \cob(\calc)_n \xrightarrow{r} w \cobcof(\calc)_n  \stackrel{\mathrm{incl}}{\hookrightarrow} w \cob(\calc)_n$$
$$w \cobcof(\calc)_n \stackrel{\mathrm{incl}}{\hookrightarrow} w \cob(\calc)_n \xrightarrow{r} 
w \cobcof(\calc)_n$$
are naturally weakly equivalent to the respective identity functors. Thus, the inclusion map is a homotopy equivalence for each $n \geq 0$, and the result follows. 
\end{proof}

\begin{rem}
There is an intermediate object between $\cobcof(\calc)_{\bullet}$ and $\cob(\calc)_{\bullet}$ where instead of (iii) above, 
we require only that the maps $F_{i+1,i+1} \to F_{i, i+1}$ are cofibrations. This defines a \emph{simplicial} subobject of $\cob(\calc)_{\bullet}$
and the associated classifying space has again the same homotopy type 
when $\calc$ has functorial factorizations. 
\end{rem}

\section{Comparison with the $S_{\bullet}$-construction} \label{section:comparison}

\subsection{Recollections} We recall Waldhausen's $S_{\bullet}$-construction from \cite{Wa}. Let $\calc$ be a (pointed) Waldhausen category
and let $\Ar[n] \colon = [n]^{[1]}$ denote the arrow category of $[n]$. The notation for an object $(i \leq j)$ of $\Ar[n]$ will be often 
abbreviated to $(ij)$.
The category $S_n \calc \subset \calc^{\Ar [n]}$ is the full subcategory spanned by the functors
$$F \colon \Ar[n] \to \calc, \quad (i \leq j) \mapsto F_{ij}$$
such that 
\begin{enumerate}
\item for each $i$, $F_{ii} = \ast$,
\item for each $i \leq j \leq k$, the map $F_{ij} \to F_{ik}$ is a 
cofibration,
\item for each $i \leq j \leq k \leq l$, the diagram in $\calc$
\[
\xymatrix{
F_{ik} \ar@{>->}[r] \ar[d] & F_{il} \ar[d] \\
F_{jk} \ar@{>->}[r] & F_{jl}
}
\]
is a pushout square. 
\end{enumerate}
The morphisms in $S_n \calc$ are given by natural transformations. 
The category $S_n \calc$ carries a natural Waldhausen category structure (see \cite{Wa}) where the weak equivalences are the objectwise weak equivalences between functors. We denote by $w S_n \calc \subset S_n \calc$ the subcategory of weak equivalences in $S_n \calc$. Both $S_n \calc$ and $w S_n \calc$ are natural in $[n]$ and they define simplicial objects in $\Cat$. Moreover, $S_{\bullet} \calc$ is a simplicial object in the category of Waldhausen categories and 
exact functors of Waldhausen categories.

By definition, we have $S_0 \calc = \{\ast\}$ and $S_1 \calc = \calc$. 
$S_2 \calc$ is the category of cofiber sequences in $\calc$. In the next 
simplicial degree, an object in $S_3 \calc$ can be depicted as a triangular staircase diagram of the form
\[
\xymatrix{
\ast \ar@{>->}[r] & F_{01} \ar@{>->}[r] \ar[d] & F_{02} \ar@{>->}[r] \ar[d] & F_{03} \ar[d] \\   
& \ast \ar@{>->}[r] & F_{12} \ar@{>->}[r] \ar[d] & F_{13} \ar[d] \\   
&& \ast \ar@{>->}[r] & F_{23} \ar[d] \\  
&&& \ast  
} 
\]
where each embedded square is a pushout. Following \cite{Wa}, we denote by $\F_n \calc \subset \calc^{[n]}$ the full subcategory spanned by filtered objects, i.e., the functors 
$$F \colon [n] \to \calc, \quad i \mapsto F_i$$
such that $F_i \to F_j$ is a cofibration for each $i \leq j$. Then the 
restriction functor $$S_n \calc \to \F_{n-1} \calc, \quad (F_{ij})_{0\leq i \leq j \leq n} \mapsto (F_{0j+1})_{0 \leq j \leq n-1}$$ is an equivalence of categories because $S_n \calc$ is obtained from $\F_{n-1} \calc $ simply by making choices of pushouts for 
each filtered object - however, $\F_{\bullet -1} \calc$ do not define a simplicial object. 

The algebraic $K$-theory $K(\calc)$ of $\calc$ \cite{Wa} is defined to be the loop space (based at the point represented by the zero object) of the geometric realization of the simplicial space $[n] \mapsto \big\vert N_{\bullet} w S_n \calc \big\vert$, i.e.,
$$K(\calc) \colon = \Omega \big\vert N_{\bullet} w S_{\bullet} \calc \big\vert.$$

\subsection{The comparison map}

Let $\calc$ be a Waldhausen category. We will compare the cobordism category of $\calc$ with the $S_{\bullet}$-construction of $\calc$. The comparison map is essentially defined by functors
\[\cob(\calc)_n \to \F_{n-1}(\calc), \quad F\mapsto \Bigg( \frac{F_{01}}{F_{00}}\rightarrowtail \dots \rightarrowtail \frac{F_{0n}}{F_{00}} \Bigg).\]
In order to promote this collection of functors to a simplicial functor mapping into the $S_{\bullet}$-construction, we must first modify our model for $\cob(\calc)_{\bullet}$ so that it includes choices of pushouts. For that purpose, we consider the full subcategory
\[\TAr[n]\subset \Ar[n]\]
spanned by objects $(i \leq j)$ where $\bar i \leq j$; here $\overline {(-)}$ denotes the order-reversing self-isomorphism of $[n]$ given by $i \mapsto n - i$. Thus, for example, the object $(1 \leq 2)$ of $\Ar[3]$ is an object of $\TAr[3]$ because $\bar 1 = 2 \leq 2$.  In more detail, the subposet 
$\TAr[3]$ is exactly the lower right half of the depicted $\Ar[3]$ 
$$
\xymatrix{
&&&& \\
(00) \ar[r] & (01) \ar[r] \ar[d] & (02) \ar[r] \ar[d] & (03) \ar[d] \ar@{--}[ur] \\
& (11) \ar[r] & (12) \ar[r] \ar@{--}[ur] \ar@{--}[dl] \ar[d] & (13) \ar[d] \\
&  & (22) \ar[r] & (23) \ar[d] \\
&&& (33) 
}
$$ 
with respect to the indicated line that passes through $(03)$. Similarly, $\TAr[n] \subset \Ar[n]$ corresponds to its ``lower right half" with respect to the dichotomizing line that passes through $(0 \leq n)$. 

More specifically, we will consider the category $\TAr[2n+1] = \TAr([n]\op\ast[n])$ (using the canonical identification of $[2n+1]$ with $[n]\op \ast [n]$). An object of $\TAr[2n+1]$ is 
either of the form $(i \leq j)$, for any $n  < i \leq j \leq 2n + 1$, or of the form $(n - i \leq j)$ where $0 \leq i \leq n < j \leq n$. The first collection of objects defines a subposet isomorphic to $\Ar[n]$, while the second collection defines a subposet isomorphic to $\mathrm{tw}[n]$. Thus, we have full inclusions
\[\Ar[n]\subset \TAr([n]\op\ast[n])\supset\mathrm{tw}[n].\]
Here $\Ar[n]$ includes as objects of the first kind which form a smaller triangular staircase at the lower right part of the staircase diagram. The second inclusion of $\mathrm{tw}[n]$ includes the objects of the second kind. These subposets have empty intersection in $\TAr([n]\op\ast [n])$. In the example 
of $\TAr[3]$ above, the inclusion $\Ar[1] \subset \TAr[3]$ corresponds to the subposet 
$$
\xymatrix{
(22) \ar[r] & (23) \ar[d] & \\
& (33) 
}
$$
while the inclusion $\mathrm{tw}[1] \subset \TAr[3]$ corresponds to the subposet 
$$
\xymatrix{
& (13)  & \\
(03) \ar[ur] && (12). \ar[ul] \\
}
$$

We define the \emph{modified} cobordism category $\cobbig(\calc)_{\bullet}$ to be the simplicial category which 
in degree $n \geq 0$ is the category of functors
\[
F\colon \TAr([n]\op\ast[n])\to \calc
\]
satisfying the conditions in the $S_{\bullet}$-construction, namely:
\begin{enumerate}
\item for each $i$, $F_{ii}= \ast$,
\item for each $i\leq j\leq k$, the map $F_{ij}\to F_{ik}$ is a cofibration, 
\item for each $i\leq j\leq k\leq l$, the diagram in $\calc$
\[\xymatrix{
F_{ik} \ar@{>->}[r] \ar[d] & F_{il} \ar[d]\\
F_{jk} \ar@{>->}[r] & F_{jl}
}\]
is a pushout square.
\end{enumerate}
The morphisms in this category are given by natural transformations between such functors. We denote by $w \cobbig(\calc)_n \subset \cobbig(\calc)_n$ the subcategory 
of objectwise weak equivalences. Note that each $F \in \cobbig(\calc)_n$ is determined up to canonical isomorphism by its restriction along 
$$\mathrm{tw}[n] \subset \TAr([n]\op \ast [n])$$
since the rest of the diagram can be obtained by making choices 
of pushouts in $\calc$. ($F_{ij}$ is the cofiber of $F_{\bar{0}i} \rightarrowtail F_{\bar{0} j}$.) Thus, the restriction functors 
\[
\cobbig(\calc)_{\bullet} \to \cob(\calc)_{\bullet}, \quad w\cobbig(\calc)_{\bullet} \to w\cob(\calc)_{\bullet} 
\]
are degreewise equivalences of categories, and therefore they induce 
homotopy equivalences between the geometric realizations. In particular, we have a canonical homotopy equivalence
\begin{equation} \label{big-cob-small-cob}
\big \vert N_{\bullet} w\cobbig(\calc)_{\bullet} \big \vert \simeq \mathsf{B} \cob(\calc, w).
\end{equation}
Then we define a morphism between simplicial objects in $\Cat$ 
\[\tau_{\bullet} \colon  w \cobbig(\calc)_{\bullet} \to wS_{\bullet} \calc\]
induced by the restriction along the inclusion $\Ar[n] \subset \TAr([n]\op\ast[n])$. Passing to the geometric realizations, we obtain a natural (zigzag) comparison map of spaces

\smallskip

\[\tau \colon \mathsf{B} \cob(\calc, w\calc) \simeq \big \vert N_{\bullet} w\cobbig(\calc)_{\bullet} \big\vert 
\longrightarrow \big\vert N_{\bullet} wS_{\bullet} \calc \big\vert.\]

\smallskip 

\begin{thm} \label{main}
The comparison map $\tau$ is a weak homotopy equivalence.
\end{thm}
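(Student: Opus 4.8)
The plan is to deduce the theorem from the fact, recalled in Section~2.1, that the last-vertex map is a natural weak equivalence, via a second description of the comparison map $\tau$ in terms of edgewise subdivision. By the canonical homotopy equivalence \eqref{big-cob-small-cob} it suffices to show that $\tau_{\bullet}\colon w\cobbig(\calc)_{\bullet}\to wS_{\bullet}\calc$ becomes a weak equivalence after applying $|N_{\bullet}(-)|$.

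The key observation is the following. Using the identification $[2n+1]=[n]\op\ast[n]$, the poset $\TAr[2n+1]$ is a cosieve in $\Ar[2n+1]$, and $\mu\alpha$ commutes with the order-reversal on each $[2k+1]$; hence restriction of $S_{\bullet}$-type diagrams along the inclusions $\TAr[2n+1]\subset\Ar[2n+1]$ assembles into a morphism of simplicial objects
$$r_{\bullet}\colon \Sd(wS_{\bullet}\calc)=wS_{2\bullet+1}\calc\longrightarrow w\cobbig(\calc)_{\bullet}.$$
Moreover the ``small staircase'' $\Ar[n]\subset\TAr([n]\op\ast[n])$ along which $\tau_{\bullet}$ is defined is precisely the copy of $\Ar[n]$ inside $\Ar[2n+1]$ determined by the last-vertex inclusion $[n]\subset[n]\op\ast[n]$, so that $\tau_{\bullet}\circ r_{\bullet}=L$, the last-vertex map. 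Since $|N_{\bullet}wL|$ is a weak equivalence by Section~2.1 (apply the statement there to each simplicial set $N_{q}wS_{\bullet}\calc$ and use the realization lemma), the two-out-of-three property reduces the theorem to the claim that $r_{\bullet}$ induces a weak equivalence after $|N_{\bullet}w(-)|$.

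To analyze $r_{\bullet}$ I would first pass to the level of objects, exactly as in the proof of Lemma~\ref{useful-lem}(b): there are natural isomorphisms $N_{q}w\cobbig(\calc)_{\bullet}\cong\mathrm{ob}(\cobbig(w_{q}\calc)_{\bullet})$ and $N_{q}wS_{2\bullet+1}\calc\cong\mathrm{ob}(S_{2\bullet+1}(w_{q}\calc))$, where $w_{q}\calc$ is the pointed Waldhausen category of $q$-chains of weak equivalences in $\calc$ with the objectwise structure, and under these $r_{\bullet}$ corresponds to $\mathrm{ob}$ of the analogous restriction map for $w_{q}\calc$. By the realization lemma for bisimplicial sets it therefore suffices to prove that, for every pointed Waldhausen category $\calc$, the restriction map of simplicial sets
$$\Sd\bigl(\mathrm{ob}(S_{\bullet}\calc)\bigr)=\mathrm{ob}(S_{2\bullet+1}\calc)\longrightarrow\mathrm{ob}(\cobbig(\calc)_{\bullet})$$
(restriction along $\TAr[2n+1]\subset\Ar[2n+1]$) is a weak equivalence.

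The main obstacle is this last point. Since $\TAr[2n+1]$ is a cosieve, extending an object of $\cobbig(\calc)_{n}$ to an object of $S_{2n+1}\calc$ amounts to filling in the complementary sieve, i.e.\ to refining the ``last column'' of the given diagram to a filtered object with prescribed subquotients; this need not be possible on the nose, so the restriction is not levelwise surjective and a plain Quillen Theorem~A argument with point-set fibres is not available. Instead I would run a Quillen Theorem~B argument, identifying the homotopy fibres with categories of $S_{\bullet}$-type extensions up to weak equivalence and proving these weakly contractible by an argument modelled on Waldhausen's Additivity Theorem together with the relative-isomorphism technique of Proposition~\ref{key-lem2} (any two such choices are linked through natural transformations that are relative isomorphisms). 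Organizing these homotopy fibres and their contractions coherently across the simplicial direction is the technical heart of the proof. (By contrast, the naive attempt to write down a strict homotopy inverse to $\tau_{\bullet}$ via the ``top-row'' functor $S_{n}\calc\to\cob(\calc)_{n}$, $X\mapsto\bigl((i\le j)\mapsto X_{0j}\bigr)$, fails because this assignment does not commute with the face map $d_{0}$; the subdivision description of $\tau_{\bullet}$ above is exactly what repairs this failure.)
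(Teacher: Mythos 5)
Your first step is exactly the paper's: you introduce the same simplicial functor (the paper calls it $\alpha_{\bullet}$, you call it $r_{\bullet}$) given by restriction along $\TAr([n]\op\ast[n])\subset\Ar([n]\op\ast[n])$, you correctly identify that $\tau_{\bullet}\circ r_{\bullet}$ is the last-vertex map and hence a weak equivalence, and you correctly observe that it now suffices to show $r_{\bullet}$ is itself a weak equivalence. This is where the proposal stops being a proof: you reduce to showing that $\mathrm{ob}(S_{2\bullet+1}\calc)\to\mathrm{ob}(\cobbig(\calc)_{\bullet})$ is a weak equivalence, note that the obvious point-set fibre argument fails since this map is not levelwise surjective, and then wave at a Quillen Theorem~B argument whose ``technical heart'' you explicitly leave open. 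As stated the sketch is not even well-posed, because after passing to $\mathrm{ob}(-)$ you are looking at a map of simplicial sets rather than of categories, so Theorem~B does not directly apply; one would first have to find a categorical model for the fibres and then cohere them in the simplicial direction, and you give no indication how to do so.

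The paper avoids precisely this difficulty by never attempting to analyze the fibres of $\alpha_{\bullet}$. Instead it subdivides a second time: it considers the composite
\begin{equation*}
\Sd w\cobbig(\calc)_{\bullet} \xrightarrow{\ \Sd\tau_{\bullet}\ } \Sd wS_{\bullet}\calc \xrightarrow{\ \alpha_{\bullet}\ } w\cobbig(\calc)_{\bullet},
\end{equation*}
observes that this is \emph{not} the last-vertex map for $w\cobbig(\calc)_{\bullet}$ (it restricts along the inclusion $j_{34}$ rather than $j_{14}$ of $[n]\op\ast[n]$ into $([n]\op\ast[n])\ast([n]\op\ast[n])$), but that there is a unique natural transformation $j_{14}\Rightarrow j_{34}$ inducing a simplicial natural transformation $L\Rightarrow \alpha_{\bullet}\circ\Sd\tau_{\bullet}$. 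Crucially this transformation has the pushout property required to factor through $w\cobpush(\calc)_{\bullet}$, so Lemma~\ref{useful-lem}(c) shows $\alpha\circ\Sd\tau$ and $L$ induce homotopic maps. Thus $\alpha\circ\Sd\tau$ is a weak equivalence, and combined with $\tau\circ\alpha$ being a weak equivalence, the two-out-of-six property yields that $\tau$ (and $\alpha$) are weak equivalences. This is the step you are missing: the $\cobpush$ machinery of Lemma~\ref{useful-lem} is not an auxiliary tool to contract fibres, it is the mechanism by which one sidesteps the fibre analysis entirely. Without some replacement for it, your Theorem~B outline remains a genuine gap.
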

\begin{proof}
We may apply Segal's edgewise subdivision to the simplicial object $w S_{\bullet} \calc$ and obtain a new simplicial category $\Sd wS_{\bullet} \calc$ with $(\Sd w S_{\bullet} \calc)_n = w S_{2n+1} \calc$. The restriction along the inclusion $\TAr([n]\op\ast[n])\subset \Ar([n]\op\ast[n])$ induces a simplicial functor 
$$\alpha_{\bullet} \colon \Sd w S_{\bullet} \calc \to w\cobbig(\calc)_{\bullet}$$ 
such that the composite map of simplicial sets
\[
N_k \Sd w S_{\bullet} \calc  \xrightarrow{\alpha_{\bullet}}  N_k w \cobbig(\calc)_{\bullet}  \xrightarrow{\tau_{\bullet}} N_k w S_{\bullet} \calc,
\]
given by restriction along $\Ar[n]\subset \Ar([n]\op\ast[n])$, is the last-vertex map. This is a weak equivalence for each $k \geq 0$, and therefore so is also the induced composite map of spaces
\[
\big\vert  N_{\bullet} \Sd w S_{\bullet} \calc \big \vert \xrightarrow{\tau \circ \alpha} \big\vert N_{\bullet} w S_{\bullet} \calc\big \vert.
\]
Similarly, we may apply the edgewise subdivision to the modified cobordism category and consider the composite simplicial functor
\begin{equation} \label{composite-map}
\Sd w\cobbig(\calc)_{\bullet} \xrightarrow{\Sd\tau_{\bullet}} \Sd w S_{\bullet}(\calc) \xrightarrow{\alpha_{\bullet}} w\cobbig(\calc)_{\bullet} \simeq w\cob(\calc)_{\bullet}.
\end{equation}
This composition is \emph{not} the last-vertex map for $w\cobbig(\calc)_\bullet$. Indeed, after identifying the poset $[n]\op\ast[n]$ with its opposite, an $n$-simplex in the simplicial set of objects of $\Sd w\cobbig(\calc)_{\bullet}$ identifies with a diagram
\[\TAr\big( ([n]\op \ast[n])\ast ([n]\op\ast[n])\big)\to \calc.\]
Then $\alpha_{\bullet} \circ \Sd\tau_{\bullet}$ corresponds to the  restriction along the inclusion 
$$j_{34} \colon [n]\op\ast[n]\subset ([n]\op\ast[n])\ast ([n]\op\ast[n])$$
into the third and fourth factors. On the other hand, the last-vertex functor
\begin{equation} \label{last-vertex} 
L \colon \Sd w\cobbig(\calc)_{\bullet} \xrightarrow{L} w\cobbig(\calc)_{\bullet} \simeq w\cob(\calc)_{\bullet}
\end{equation}
corresponds to the restriction along the inclusion 
 $$j_{14} \colon [n]\op\ast[n]\subset ([n]\op\ast[n])\ast ([n]\op\ast[n])$$
into the first and fourth factors. However, the two inclusion functors are related by a (unique) natural transformation $j_{14} \Rightarrow j_{34}$, which induces a simplicial natural transformation 
\begin{equation} \label{nat-transformation} 
L \Rightarrow \alpha_{\bullet} \circ \Sd \tau_{\bullet} \colon \Sd w \cobbig(\calc)_{\bullet} \to w\cobbig(\calc)_{\bullet} \simeq w \cob(\calc)_{\bullet}.
\end{equation}
between the composite \eqref{composite-map} and the last-vertex functor \eqref{last-vertex}. 

This natural transformation \eqref{nat-transformation} has an extra property: given an object $G$ in the category $\Sd w\cobbig(\calc)_n$, the morphism 
$$F^0:=L(G)\to F^1:= (\alpha_{\bullet} \circ \Sd\tau_{\bullet})(G)$$ is such that the diagram
\[\xymatrix{
F^0_{ij} \ar@{>->}[r] \ar[d] & F^0_{ik} \ar[d]\\
F^1_{ij} \ar@{>->}[r] & F^1_{ik}
}\]
is a pushout square for each $0 \leq i\leq j\leq k \leq n$. (Indeed,  we have $F^0_{ij}=G_{\mu(i)\rho(j)}$ and $F^1_{ij}=G_{\nu(i)\rho(j)}$ where 
$\mu, \nu\colon [n]\op\to [n]\op\ast[n]\ast[n]\op\ast[n]$ are the two inclusions and $\rho\colon [n]\to [n]\op\ast[n]\ast[n]\op\ast[n]$ is the inclusion into the last factor.) 
This means that the natural transformation \eqref{nat-transformation} defines a simplicial functor 
$$H_{\bullet} \colon \Sd w \cobbig(\calc)_{\bullet} \to w \cobpush( \calc)_{\bullet}$$
such that the composition with the source and target projections, gives the last-vertex functor \eqref{last-vertex} and the simplicial functor \eqref{composite-map}, respectively. By Lemma \ref{useful-lem}, the source and target projections induce homotopic maps after geometric realization, therefore also the maps induced by \eqref{composite-map} and \eqref{last-vertex} must be homotopic. Since \eqref{last-vertex} induces the last-vertex map which is a weak homotopy equivalence, so is also the map induced by \eqref{composite-map} and the result follows.
\end{proof}

The classifying space $\mathsf{B}\cob(\calc, w\calc)$ is based at the zero object of the Waldhausen category $\calc$, denoted $*  \in w\calc = w \cob(\calc)_0$ and regarded as a $0$-simplex (`object') of the cobordism category 
$\cob(\calc, w\calc)$. This choice of basepoint is natural with respect to exact functors of Waldhausen categories. Moreover, the natural comparison map $\tau$ preserves the basepoint. Thus, passing to the loop spaces, we obtain the following corollary.  

\begin{cor} \label{main2}
The map $\Omega(\tau) \colon \Omega \ \mathsf{B} \cob(\calc, w\calc) \longrightarrow K(\calc)$
is a weak homotopy equivalence.
\end{cor}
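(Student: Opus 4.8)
The plan is to derive the corollary directly from Theorem \ref{main} by applying the loop-space functor. First I would recall from the Recollections that, by definition, $K(\calc) = \Omega \bigl\vert N_{\bullet} w S_{\bullet}\calc \bigr\vert$, where the loop space is taken at the basepoint determined by the zero object of $\calc$ (via $S_1\calc = \calc$). The target of the comparison map $\tau$ is exactly $\bigl\vert N_{\bullet} w S_{\bullet}\calc\bigr\vert$, and, as observed in the paragraph preceding the statement, $\tau$ is a based map: it carries the chosen basepoint of $\mathsf{B}\cob(\calc, w\calc)$ — the zero object $* \in w\cob(\calc)_0$ regarded as a $0$-simplex — to the corresponding basepoint of $\bigl\vert N_{\bullet} w S_{\bullet}\calc\bigr\vert$. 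Hence $\Omega(\tau)$ is defined and its target is $K(\calc)$.

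Next I would invoke Theorem \ref{main}, which states that $\tau$ is a weak homotopy equivalence. Both spaces in question are geometric realizations of bisimplicial sets (nerves in one direction, and $w\cob(\calc)_{\bullet}$ resp. $w S_{\bullet}\calc$ in the other), so they have the homotopy type of CW complexes and the chosen basepoints are non-degenerate. Therefore $\tau$ is in fact a based homotopy equivalence by Whitehead's theorem, and applying the based loop functor $\Omega$ — which preserves homotopy equivalences of well-pointed spaces — shows that $\Omega(\tau)$ is a homotopy equivalence, in particular a weak homotopy equivalence, as asserted.

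The only minor bookkeeping point is that $\tau$ is really the zigzag $\mathsf{B}\cob(\calc, w\calc) \simeq \bigl\vert N_{\bullet} w\cobbig(\calc)_{\bullet}\bigr\vert \to \bigl\vert N_{\bullet} w S_{\bullet}\calc\bigr\vert$, where the first map is the canonical equivalence \eqref{big-cob-small-cob}; but that equivalence is induced by a degreewise equivalence of categories which respects the relevant objects, so every arrow in the zigzag is based, and applying $\Omega$ throughout is harmless. There is no real obstacle here: the entire content of the corollary is Theorem \ref{main} together with the observations that $K(\calc)$ is by definition the loop space of the target of $\tau$ and that $\tau$ preserves the basepoint.
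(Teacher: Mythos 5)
Your argument is essentially identical to the paper's: the paper likewise notes that $\tau$ preserves the basepoint given by the zero object and then passes to loop spaces to deduce the corollary from Theorem \ref{main}. The extra remarks about CW structure and Whitehead's theorem are fine but not strictly needed, since applying $\Omega$ to a weak homotopy equivalence of well-pointed spaces already yields a weak homotopy equivalence.
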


Combined with Proposition \ref{refined2}, this also yields the following corollary. 

\begin{cor} \label{main3}
Let $\calc$ be a Waldhausen category with functorial factorizations. 
Then there is a natural (zigzag) weak homotopy equivalence 
$$\Omega \ \mathsf{B} \cobcof(\calc, w\calc) \simeq K(\calc).$$
\end{cor}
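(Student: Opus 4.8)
The plan is to deduce Corollary \ref{main3} by combining the weak homotopy equivalence of Corollary \ref{main2} with the comparison of the two cobordism category constructions provided by Proposition \ref{refined2}. Recall that Corollary \ref{main2} gives a natural weak homotopy equivalence $\Omega(\tau)\colon \Omega\,\mathsf{B}\cob(\calc,w\calc) \to K(\calc)$ for any Waldhausen category $\calc$, and Proposition \ref{refined2} gives, under the hypotheses that $\calc$ has an initial object as required and functorial factorizations, a weak homotopy equivalence $\mathsf{B}\cobcof(\calc,w\calc) \to \mathsf{B}\cob(\calc,w\calc)$ induced by the inclusion of the symmetric cobordism category.

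First I would note that a Waldhausen category with functorial factorizations automatically satisfies the hypotheses needed for the symmetric construction: it is in particular an unpointed Waldhausen category, and its zero object serves as the required initial object with the property that $\varnothing \to X$ is a cofibration for all $X$ (this follows from the Waldhausen axioms, since the map from the zero object is always a cofibration). Hence both $\mathsf{B}\cobcof(\calc,w\calc)$ and the inclusion map of Proposition \ref{refined2} are defined, and the latter is a weak homotopy equivalence.

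The remaining step is purely formal: one composes the weak homotopy equivalence of Proposition \ref{refined2} with the weak homotopy equivalence $\Omega(\tau)$ of Corollary \ref{main2}. Applying the loop space functor $\Omega$ to the weak homotopy equivalence $\mathsf{B}\cobcof(\calc,w\calc)\xrightarrow{\simeq}\mathsf{B}\cob(\calc,w\calc)$ — which is basepoint-preserving since the inclusion sends the object $*$ to $*$ — yields a weak homotopy equivalence $\Omega\,\mathsf{B}\cobcof(\calc,w\calc)\xrightarrow{\simeq}\Omega\,\mathsf{B}\cob(\calc,w\calc)$. Concatenating with $\Omega(\tau)$ produces the desired natural zigzag weak homotopy equivalence $\Omega\,\mathsf{B}\cobcof(\calc,w\calc)\simeq K(\calc)$. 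Naturality in $\calc$ (with respect to exact functors preserving the chosen structure) follows from the naturality of each constituent map.

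There is no real obstacle here; the only point requiring a word of care is the verification that a Waldhausen category with functorial factorizations indeed meets the ``unpointed Waldhausen category with initial object'' hypothesis of Proposition \ref{refined2}, and that all the maps involved respect basepoints so that applying $\Omega$ is legitimate. Both are immediate from the definitions recalled in the excerpt.
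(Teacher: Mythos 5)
Your proof is correct and follows exactly the route the paper takes: the paper deduces Corollary \ref{main3} by combining Proposition \ref{refined2} with Corollary \ref{main2}, just as you do. The extra checks you spell out (that the zero object serves as the required initial object, and that the maps are basepoint-preserving so $\Omega$ may be applied) are implicit in the paper but correctly identified as the only points needing verification.
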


\subsection{Deloopings} \label{deloopings} Let $\calc$ be a Waldhausen category.
The application of the $S_{\bullet}$-construction can be iterated 
since $S_{\bullet}\calc$  defines a simplicial object in the category of Waldhausen categories and exact functors. It is well known that the iteration of the $S_{\bullet}$-construction produces canonical deloopings of $K(\calc)$, see \cite[1.5]{Wa}. 

The same is true for the cobordism construction $\calc \mapsto\cob(\calc)_{\bullet}$ applied to a Waldhausen category $\calc$. First, in order to see that the cobordism category construction can be iterated, it suffices to promote the simplicial category $\cob(\calc)_{\bullet}$ to a simplicial object 
in the category of Waldhausen categories. As subcategory of cofibrations $co \cob(\calc)_n \subset \cob(\calc)_n$, we consider the subcategory of
those natural transformations $\phi \colon (F_{\bullet\bullet}) \to (F'_{\bullet \bullet})$ such that for each $0 \leq i \leq n$, the morphism 
of filtered objects in $\F_{n-i+1} \calc$
$$
\xymatrix{
F_{ii} \ar@{>->}[r] \ar[d]_{\phi_{ii}} & F_{i, i+1} \ar@{>->}[r] \ar[d]_{\phi_{i,i+1}} & \cdots \ar@{>->}[r]& F_{in} \ar[d]_{\phi_{in}} \\
F'_{ii} \ar@{>->}[r]  & F'_{i, i+1} \ar@{>->}[r] & \cdots \ar@{>->}[r] & F'_{in} \\
}
$$
is a cofibration in $\F_{n-i+1} \calc$ (see \cite[1.1]{Wa}). This subcategory of cofibrations together with $w \cob(\calc)_n$ makes $\cob(\calc)_n$ 
into a Waldhausen category which is natural in $[n] \in \Delta^{\op}$. Thus, the cobordism category
construction can be iterated so that we obtain an $n$-fold simplicial category, for $n \geq 1$,
$$\calc \mapsto \cob^{(n)}(\calc)_{\bullet \cdots \bullet}.$$
We write $\mathsf{B}^{(n)} \cob(\calc, w)$ for the classifying space
of this multisimplicial category (i.e., the geometric realization of 
the associated $n$-fold simplicial space).

Let $\cob(\calc)_{1, \ast} \subset \cob(\calc)_1$ denote the full subcategory which consists of those diagrams $(F_{ij})$ such that $F_{00} = F_{11} = \ast$. Then we have $\cob(\calc)_{1, \ast} = \calc$. Each point in $w \cob(\calc)_{1, \ast}$ defines a loop in $\mathsf{B} \ \cob(\calc, w)$
and therefore we have a natural map 
\begin{equation} \label{group-comp-map} 
\big \vert w \calc \big\vert \to \Omega \ \mathsf{B} \cob(\calc, w\calc).
\end{equation}
Using the identification $\tau$, this agrees with the usual ``group completion" map $\big \vert w \calc \big \vert \to K(\calc)$. Moreover, using the naturality of this map \eqref{group-comp-map}, we also obtain natural maps 
\begin{equation} \label{spectrum}
\mathsf{B}^{(n-1)} \cob(\calc, w\calc) \to \Omega \ \mathsf{B}^{(n)} \cob(\calc, w)
\end{equation}
which make the sequence of spaces $\{\mathsf{B}^{(n)} \cob(\calc, w)\}_{n \geq 1}$ into a spectrum. As a consequence of Theorem \ref{main} and the naturality of $\tau$, we obtain natural (zigzag) weak homotopy equivalences
\begin{equation}
\Omega^n \mathsf{B}^{(n)} \cob(\calc, w) \simeq \Omega^n \big \vert N_{\bullet} w S^{(n)}_{\bullet} \calc \big \vert.
\end{equation}
These maps are also natural in $n \geq 1$, so they define a (zigzag) map of spectra. Using the fact that iterating the $S_{\bullet}$-construction defines canonical deloopings \cite[1.5]{Wa}, it follows that the maps \eqref{spectrum} are also weak homotopy equivalences. As a consequence, the spectrum $\{\mathsf{B}^{(n)} \cob(\calc, w\calc)\}_{n \geq 1}$ is an $\Omega$-spectrum.

\section{Example: Cobordism categories and $A$-theory}

\subsection{Models for $A$-theory} \label{sec:models_for_A_theory}
Let $X$ be a topological space and $\mathcal{R}^{(h)f}(X)$ the Waldhausen category of relative (homotopy) finite retractive space over $X$ (see \cite[2.1]{Wa}). It is well known that $\mathcal{R}^{(h)f}(X)$ has functorial factorizations given by a mapping cylinder construction. 

We now consider the (symmetric) cobordism categories associated to these two Waldhausen categories. Explicitly, in the case of $\Rfd(X)$, the symmetric cobordism category $\cobcof(\Rfd(X), w)$ is a semi-simplicial space such that:
\begin{enumerate}
\item the space of $0$-simplices is the moduli space of relative homotopy finite retractive spaces over $X$ (with respect to the class of homotopy equivalences),
\item the space of $1$-simplices is the moduli space of diagrams in $\Rfd(X)$ as follows:
\[
\xymatrix{
&Y_{01} &\\
Y_0 \ar@{>->}[ur] & & Y_1 \ar@{>->}[ul] \\
}
\]
such that $Y_0 \cup_X Y_1 \to Y_{01}$ is a cofibration (with respect to the objectwise homotopy equivalences between such diagrams of spaces). Such a diagram may be regarded as a \emph{formal cobordism} between the retractive spaces $Y_0$ and $Y_1$. 
\item the space of $n$-simplices is the space of $n$-composable strings of $1$-simplices. 
\end{enumerate}
Theorems \ref{main} and \ref{main3} imply that there are natural (zigzag) homotopy equivalences in $X$
$$\Omega \ \mathsf{B} \cobcof(\Rfd(X), w) \simeq \Omega \ \mathsf{B} \cob(\Rfd(X), w) \simeq A(X) = K(\Rfd(X)).$$
There are similar homotopy equivalences in the case of $\mathcal{R}^{f}(X)$.

\smallskip 

More generally, for a fibration $p \colon E \to B$, we may consider 
the cobordism category $\cobcof(\Rfd(p), w)$ associated with the Waldhausen category of $\Rfd(p)$ (see \cite{RS1}). Then we obtain similarly natural (zigzag) 
homotopy equivalences in $p$,
\begin{equation} \label{A-theory1}
\Omega \ \mathsf{B} \cobcof(\Rfd(p), w) \simeq \Omega \ \mathsf{B} \cob(\Rfd(p), w) \simeq A(p) = K(\Rfd(p)),
\end{equation}
where $A(p)$ denotes the bivariant $A$-theory of $p$. 

\smallskip

Interestingly, the additional flexibility of working with unpointed Waldhausen categories furnishes yet another model for $A$-theory. Given a space $X$, let $\calc^{(h)f}(X)$ denote the 
category whose objects are pairs $(Y, u \colon Y \to X)$ where $Y$ is (homotopy equivalent to) a finite CW complex. A morphism $f \colon (Y, u) \to (Y', u')$ in $\calc^{hf}(X)$ is a map $f \colon Y \to Y'$ such that $u = u' f$. 
We say that this is a cofibration (resp. weak equivalence) if the underlying map $f$ is a Hurewicz cofibration (resp. homotopy equivalence). In the case of $\calc^f(X)$, the cofibrations are the inclusions of CW complexes. Note that the category $\calc^{(h)f}(X)$ has an initial 
object given by the pair $(\varnothing, \varnothing \to X)$. With this structure, $\calc^{(h)f}(X)$ becomes an unpointed Waldhausen category with initial object and functorial factorizations. 

The correspondence 
$X \mapsto \calc^{(h)f}(X)$ defines a functor from spaces to the category of unpointed Waldhausen categories which sends a map $f \colon X \to X'$ to the exact functor 
$$\calc^{(h)f}(X) \to \calc^{(h)f}(X'), \ (Z, Z \to X) \mapsto (Z, Z \to X \to X').$$
 The associated 
symmetric cobordism category $\cobcof(\calc^{hf}(X))_{\bullet}$ may be regarded as the category of formal cobordisms between homotopy finite spaces with a structure map to $X$.
By Proposition \ref{refined2}, we 
also have natural homotopy equivalences in $X$
\begin{equation} \mathsf{B}\cobcof(\calc^{(h)f}(X), w) \simeq \mathsf{B}\cob(\calc^{(h)f}(X), w).
\end{equation}

\smallskip

There are exact functors of unpointed Waldhausen categories relating $\calc^{hf}(X)$ and $\Rfd(X)$. First, there is an exact functor that adds a disjoint copy of $X$, 
$$\mathrm{J}_X \colon \calc^{hf}(X) \to \Rfd(X)$$
$$(Y, u \colon Y \to X) \mapsto (Y \sqcup X, X \subseteq Y \sqcup X \xrightarrow{u + \mathrm{id}_X} X).$$
Secondly, when $X$ is homotopy finite, there is an exact functor that forgets the section,
$$\mathrm{U}_X \colon \Rfd(X) \to \calc^{hf}(X)$$
$$(Z, X \rightarrowtail Z \xrightarrow{r} X) \mapsto (Z, r \colon Z \to X).$$
Note that the last functor does not preserve the initial object -- but it preserves pushouts. These functors restrict also to exact functors on $\calc^f(X)$ and $\mathcal{R}^f(X)$ 
respectively when $X$ is a finite CW complex.

\begin{prop} \label{A-theory-model}
Let $X$ be a homotopy finite space. Then the exact functors $\mathrm{U}_X$ and $\mathrm{J}_X$ induce inverse homotopy equivalences: 
$$\Omega \mathsf{B}\cob(\mathrm{J}_X, w \mathrm{J}_X) \colon \Omega  \mathsf{B}\cob(\calc^{hf}(X), w\calc^{hf}(X)) \to \Omega  \mathsf{B}\cob(\Rfd(X), w\Rfd(X))$$
$$\Omega \mathsf{B}\cob(\mathrm{U}_X, w\mathrm{U}_X) \colon \Omega  \mathsf{B}\cob(\Rfd(X), w\Rfd(X)) \to \Omega  \mathsf{B}\cob(\calc^{hf}(X), w\calc^{hf}(X)).$$
Moreover, the same is true for the restrictions of these functors to 
$\calc^f(X)$ and $\mathcal{R}^f(X)$ respectively, when $X$ is a finite CW complex.
\end{prop}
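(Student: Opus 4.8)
The plan is to show that each of the two composites $\mathrm{U}_X\circ\mathrm{J}_X$ and $\mathrm{J}_X\circ\mathrm{U}_X$ is connected to the relevant identity functor by a relative isomorphism, and then to invoke Proposition~\ref{key-lem2} together with the functoriality of $\calc\mapsto\mathsf{B}\cob(\calc,w\calc)$. Since $\mathsf{B}\cob(\mathrm{U}_X,w\mathrm{U}_X)\circ\mathsf{B}\cob(\mathrm{J}_X,w\mathrm{J}_X)=\mathsf{B}\cob(\mathrm{U}_X\mathrm{J}_X,w\mathrm{U}_X\mathrm{J}_X)$, and similarly for the other composite, this will immediately give that $\mathsf{B}\cob(\mathrm{J}_X,w\mathrm{J}_X)$ and $\mathsf{B}\cob(\mathrm{U}_X,w\mathrm{U}_X)$ are mutually inverse homotopy equivalences, and the loop space statement then follows from a short connectivity remark.

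For the composite $\mathrm{U}_X\circ\mathrm{J}_X\colon\calc^{hf}(X)\to\calc^{hf}(X)$, unwinding the definitions shows that it sends $(Y,u)$ to $(Y\sqcup X,u+\mathrm{id}_X)$; that is, up to canonical isomorphism it is the ``shift'' functor $-\sqcup(X,\mathrm{id}_X)$ attached to the object $(X,\mathrm{id}_X)$ of $\calc^{hf}(X)$. As $\calc^{hf}(X)$ is an unpointed Waldhausen category with initial object $(\varnothing,\varnothing\to X)$, and $\varnothing\to Y$ is always a Hurewicz cofibration, the Example following Proposition~\ref{key-lem2} applies verbatim: the canonical natural transformation $\mathrm{id}\Rightarrow(-\sqcup(X,\mathrm{id}_X))$ is a relative isomorphism. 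Hence $\mathsf{B}\cob(\mathrm{U}_X\mathrm{J}_X,w\mathrm{U}_X\mathrm{J}_X)$ is homotopic to the identity of $\mathsf{B}\cob(\calc^{hf}(X),w\calc^{hf}(X))$.

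For the composite $\mathrm{J}_X\circ\mathrm{U}_X\colon\Rfd(X)\to\Rfd(X)$, a retractive space $(Z,i\colon X\rightarrowtail Z,r\colon Z\to X)$ is sent to $Z\sqcup X$, with section the inclusion of the new copy of $X$ and retraction $(r,\mathrm{id}_X)$. I would introduce the natural transformation $\psi\colon\mathrm{J}_X\mathrm{U}_X\Rightarrow\mathrm{id}_{\Rfd(X)}$ whose component at $Z$ is $\mathrm{id}_Z\sqcup i\colon Z\sqcup X\to Z$; this is a morphism of retractive spaces since it is compatible with the sections and, using $r\circ i=\mathrm{id}_X$, also with the retractions, and it is evidently natural in $Z$. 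The crucial point — and the step I expect to be the main obstacle — is the verification that $\psi$ is a relative isomorphism: for a cofibration $f\colon Z\rightarrowtail Z'$ in $\Rfd(X)$ one must check that the square with horizontal maps $\mathrm{J}_X\mathrm{U}_X(f)$ and $f$ and vertical maps $\psi_Z$, $\psi_{Z'}$ is a pushout in $\Rfd(X)$. Since $f\sqcup\mathrm{id}_X$ is again a cofibration, this pushout may be computed on underlying spaces, where one finds that both summands of $Z\sqcup X$ are absorbed into $Z'$ — the copy of $Z$ via $f$ and the copy of $X$ via $i'=f\circ i$ — so the canonical comparison map from the pushout to $Z'$ is a homeomorphism. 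Granting this, Proposition~\ref{key-lem2} gives that $\mathsf{B}\cob(\mathrm{J}_X\mathrm{U}_X,w\mathrm{J}_X\mathrm{U}_X)$ is homotopic to the identity of $\mathsf{B}\cob(\Rfd(X),w\Rfd(X))$.

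Finally, to obtain the loop space statement, I would observe that both $\mathsf{B}\cob(\calc^{hf}(X),w\calc^{hf}(X))$ and $\mathsf{B}\cob(\Rfd(X),w\Rfd(X))$ are path-connected: whenever an unpointed Waldhausen category has an initial object $\varnothing$ with each $\varnothing\to Y$ a cofibration (or, as for $\Rfd(X)$, a zero object), the cospan $\varnothing\rightarrowtail Y\xleftarrow{\mathrm{id}}Y$ exhibits every object $Y$ as cobordant to $\varnothing$. Since $\mathrm{J}_X$ carries $(\varnothing,\varnothing\to X)$ to the zero object $X$ of $\Rfd(X)$, the map $\mathsf{B}\cob(\mathrm{J}_X,w\mathrm{J}_X)$ is based, and looping the two mutually inverse homotopy equivalences — both spaces being connected — produces mutually inverse homotopy equivalences $\Omega\mathsf{B}\cob(\mathrm{J}_X,w\mathrm{J}_X)$ and $\Omega\mathsf{B}\cob(\mathrm{U}_X,w\mathrm{U}_X)$, as claimed. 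The case of $\calc^f(X)$ and $\mathcal{R}^f(X)$ is handled by the same argument: these functors restrict to the respective subcategories when $X$ is a finite CW complex, the relative isomorphisms $\mathrm{id}\Rightarrow-\sqcup(X,\mathrm{id}_X)$ and $\psi$ are given by the same formulas, and forming coproducts with the finite CW complex $X$ and taking pushouts along inclusions of CW complexes keeps one inside $\calc^f(X)$, resp.\ $\mathcal{R}^f(X)$.
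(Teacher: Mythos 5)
Your proof is correct and follows essentially the same route as the paper: reduce to Proposition~\ref{key-lem2} by exhibiting relative isomorphisms $\mathrm{Id}\Rightarrow\mathrm{U}_X\mathrm{J}_X$ (the canonical inclusion into the shift $-\sqcup(X,\mathrm{id}_X)$) and $\mathrm{J}_X\mathrm{U}_X\Rightarrow\mathrm{Id}$ (the fold map $\mathrm{id}_Z\sqcup i$), with the finite case handled by the same formulas. Your write-up additionally spells out the pushout verification for $\psi$ and the connectivity remark needed to loop the equivalences, both of which the paper leaves implicit.
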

\begin{proof}
By Proposition \ref{key-lem2}, it suffices to show that both composite functors $\mathrm{U}_X \circ \mathrm{J}_X$ and $\mathrm{J}_X \circ \mathrm{U}_X$ are connected 
to the respective identity functors by relative isomorphisms. For the composite $\mathrm{U}_X \circ \mathrm{J}_X \colon \calc^{hf}(X) \to \calc^{hf}(X)$, 
$$(Y, u \colon Y \to X) \mapsto (Y \sqcup X, Y \sqcup X \xrightarrow{u + \mathrm{id}_X} X),$$
there is a relative isomorphism $\phi \colon \mathrm{Id} \to \mathrm{U}_X \circ \mathrm{J}_X$ where 
$$\phi_{(Y, u)} \colon Y \to Y \sqcup X$$
is the canonical inclusion. For the composite $\mathrm{J}_X \circ \mathrm{U}_X \colon \Rfd(X) \to \Rfd(X)$, given by
$$(Y, X \rightarrowtail Y \xrightarrow{r} X) \mapsto (Y \sqcup X, X \subseteq Y \sqcup X \xrightarrow{r + \mathrm{id}_X} X),$$ 
there is a relative isomorphism 
$\psi \colon \mathrm{J}_X \circ \mathrm{U}_X \to \mathrm{Id}$ where 
$$\psi_{(Y, X \stackrel{i}{\rightarrowtail} Y \xrightarrow{r} X)} \colon Y \sqcup X \xrightarrow{\mathrm{id}_Y + i} Y.$$ 
The same argument applies for the comparison of $\calc^f(X)$ and $\mathcal{R}^f(X)$.
\end{proof}

\begin{rem} \label{bivariant-simplified}
Similarly we can define an unpointed Waldhausen category $\calc^{hf}(p)$ for a fibration $p \colon E \to B$. The classifying space of its cobordism category is a 
model for $A(p)$ when $p$ has homotopy finite fibers. The proof is exactly the same. 
\end{rem}

\begin{prop} \label{finite-model}
The exact inclusion functor 
$\calc^f(X) \to \calc^{hf}(X)$ (of unpointed Waldhausen categories) induces a homotopy equivalence
$$\mathsf{B}\cob(\calc^{f}(X), w \calc^{f}(X)) \xrightarrow{\simeq} \mathsf{B}\cob(\calc^{hf}(X), w \calc^{hf}(X)).$$
\end{prop}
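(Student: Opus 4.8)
The plan is to bootstrap from the comparison already available for Waldhausen's $S_\bullet$-construction, using the two $A$-theory models of Proposition \ref{A-theory-model} together with Corollary \ref{main2}, rather than setting up an approximation theorem for the cobordism category from scratch. As a first step I would reduce to the case where $X$ is a finite CW complex. For any space $X$, the classifying space $\mathsf B\cob(\calc^{(h)f}(X),w)$ is path-connected, since every object $(Y,u)$ is joined to the initial object $(\varnothing,\varnothing\to X)$ by the $1$-simplex of the cobordism category given by the cospan $Y\xrightarrow{\ \mathrm{id}\ }Y\leftarrow\varnothing$; its higher homotopy groups are, via Corollary \ref{main2} and Proposition \ref{A-theory-model}, those of $A(X)$ up to a degree shift, hence weak homotopy invariants of $X$, so the assertion is insensitive to replacing $X$ by a CW approximation. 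Finally, $\cob(-)$, the nerve construction, and geometric realization all commute with filtered colimits, and $\calc^{(h)f}$ is, up to the relevant weak equivalences, the filtered colimit of its values on the finite subcomplexes of a CW model of $X$; this reduces the statement to the case of a finite CW complex $X$. (One may also simply read the proposition in that range, since this is the only case used later.)

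So let $X$ be a finite CW complex. As both classifying spaces in question are path-connected, it suffices to show that the inclusion $\calc^f(X)\hookrightarrow\calc^{hf}(X)$ induces a weak homotopy equivalence on loop spaces. The exact functors $\mathrm J_X$ are natural for the inclusions $\calc^f(X)\subseteq\calc^{hf}(X)$ and $\mathcal R^f(X)\subseteq\Rfd(X)$ (adding a disjoint copy of $X$ commutes with these inclusions), so we obtain a commutative square of spaces
\[
\xymatrix{
\Omega\mathsf B\cob(\calc^f(X),w) \ar[r]\ar[d]_-{\simeq} & \Omega\mathsf B\cob(\calc^{hf}(X),w)\ar[d]^-{\simeq}\\
\Omega\mathsf B\cob(\mathcal R^f(X),w)\ar[r] & \Omega\mathsf B\cob(\Rfd(X),w),
}
\]
whose vertical maps are induced by $\mathrm J_X$ and are homotopy equivalences by Proposition \ref{A-theory-model} (its main part for the right-hand column, the ``moreover'' for the left-hand column, both using that $X$ is finite CW). By Corollary \ref{main2}, naturally in the Waldhausen category, the bottom row is identified with the map $K(\mathcal R^f(X))\to K(\Rfd(X))=A(X)$ induced by the inclusion of relative finite into relative homotopy finite retractive spaces over $X$. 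That map is a weak homotopy equivalence by Waldhausen's comparison of finite and homotopy finite models for $A$-theory \cite{Wa}. Hence the top row is a weak homotopy equivalence, and therefore so is the original map of classifying spaces.

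The only genuinely external ingredient is this last comparison $K(\mathcal R^f(X))\simeq K(\Rfd(X))$; it is also what forces the detour through $\Rfd(X)$ rather than working directly with $\calc^{(h)f}(X)$, which has no zero object and to which Corollary \ref{main2} does not literally apply. A more self-contained argument would instead establish an approximation theorem for the cobordism category: reducing degreewise in the cobordism direction and identifying $\cob(\calc)_n$, up to equivalence of categories with weak equivalences, with a category of zigzag diagrams in $\calc$ obtained by choosing pushouts (in analogy with $S_n\calc\simeq\F_{n-1}\calc$), one would have to check that $\calc^f(X)\hookrightarrow\calc^{hf}(X)$ satisfies Waldhausen's approximation property --- every map from a finite CW complex to a homotopy finite space over $X$ factors up to weak equivalence through a finite CW complex over $X$ --- and that this property survives the passage to those diagram categories with their built-in pushout constraints. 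That interplay between the finiteness condition and the pushout constraints is the main obstacle in the self-contained approach, and is precisely what the route via $\mathrm J_X$, $\mathrm U_X$ and Proposition \ref{A-theory-model} circumvents, by importing it from the already-established theory of the $S_\bullet$-construction.
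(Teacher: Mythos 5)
Your argument for the case of a \emph{finite} CW complex $X$ is correct and is essentially what the paper points out in a parenthetical remark (``Therefore the statement of the proposition for a finite CW complex $X$ follows directly from Proposition \ref{A-theory-model}.''): naturality of $\mathrm J_X$ with respect to the inclusions $\calc^f \subset \calc^{hf}$ and $\mathcal R^f \subset \Rfd$, Proposition \ref{A-theory-model} for the verticals, Corollary \ref{main2}, and Waldhausen's \cite[Prop.\ 2.1.1]{Wa} for the bottom row.

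The problem is the preliminary reduction to finite CW complexes, which is circular as written. First, the CW-approximation step cites Corollary \ref{main2} and Proposition \ref{A-theory-model} to identify $\pi_*\mathsf B\cob(\calc^{hf}(X),w)$ with $\pi_{*-1}A(X)$, but Corollary \ref{main2} requires a zero object and Proposition \ref{A-theory-model} requires $X$ to be homotopy finite; neither applies to a general $X$. The identification for arbitrary $X$ is exactly Theorem \ref{model-A-theory2}, whose proof in the paper \emph{uses} Proposition \ref{finite-model}. Second, the filtered-colimit step asserts that $\calc^{hf}$ is, ``up to the relevant weak equivalences,'' the colimit of its values on finite subcomplexes. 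That is true on the nose for $\calc^f$ (finite CW complexes are compact), but for $\calc^{hf}$ it is not a formal fact: a homotopy finite space over $X$ need not be compact, so the statement that the inclusion of the colimit induces a weak equivalence on cobordism categories is itself an approximation-type statement of essentially the same strength as the proposition you are proving. Indeed, the paper establishes that $\mathsf B\cob(\calc^{hf}(-),w)$ sends the filtered colimit $X=\colim X_\alpha$ to a homotopy colimit precisely by invoking Proposition \ref{finite-model} for all $X$ in the colimit system \emph{and} for $X$ itself, together with the easy statement for $\calc^f$. So your reduction presupposes the conclusion.

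The paper's actual proof avoids any reduction: it runs the Waldhausen/Cisinski Approximation Theorem degreewise on the cobordism categories $\cobcof(\calc^f(X))_n \to \cobcof(\calc^{hf}(X))_n$ (for $n=0$ arguing as in \cite[Prop.\ 2.1.1]{Wa}, for $n>0$ adapting \cite[Lemma 1.6.6]{Wa}, and then applying \cite[Lemma 7.6.7]{Cis}, which only needs an initial object rather than a zero object). This is exactly the ``more self-contained argument'' you sketch and then set aside in your last paragraph; it is not merely an alternative, it is what makes the proposition true for arbitrary $X$, which is the case the paper needs.
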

\begin{proof}
It is well known that the inclusion functor $\mathcal{R}^f(X) \to \Rfd(X)$ induces a $K$-equivalence  as a consequence of Waldhausen's Approximation 
Theorem (see \cite[Proposition 2.1.1]{Wa}). (Therefore the statement of the proposition for a finite CW complex $X$ follows directly from Proposition 
\ref{A-theory-model}.) The 
argument for the inclusion functor $\calc^f(X) \to \calc^{hf}(X)$ is similar so we only sketch the proof. For each $n \geq 0$, the functor (of unpointed Waldhausen categories) 
$$\cobcof(\calc^f(X))_n \to \cobcof(\calc^{hf}(X))_n$$ satisfies the conditions of the Approximation Theorem in \cite[pp. 35-37]{Wa}. For $n = 0$, the argument is similar to \cite[Proposition 2.1.1]{Wa}, and for $n > 0$, the proof of \cite[Lemma 1.6.6]{Wa} is easily adapted to this purpose. Then, following the proof of the Approximation Theorem in \cite{Wa}, or applying \cite[Lemma 7.6.7]{Cis}, we conclude that the functor 
$$w\cobcof(\calc^f(X))_n \to w\cobcof(\calc^{hf}(X))_n$$
induces a homotopy equivalence after passing to the classifying spaces -- as shown in the proof of \cite[Lemma 7.6.7]{Cis}, only the existence of an initial object, rather than a zero object, is required. 
The result then follows. 
\end{proof}

While $\mathrm{U}_X$ is well defined only when $X$ is homotopy finite and it is not natural in $X$, the functor $\mathrm{J}_X$ is a natural transformation 
defined for any $X$. Then we have the following result which generalizes Proposition \ref{A-theory-model} to an arbitrary $X$. 

\begin{thm} \label{model-A-theory2}
The natural map 
$$ \Omega \ \mathsf{B}\cob(\calc^{hf}(X), w\calc^{hf}(X)) \xrightarrow{\Omega \ \mathsf{B}\cob(\mathrm{J}_X, w\mathrm{J}_X)} \Omega \ \mathsf{B}\cob(\Rfd(X), w\Rfd(X)) \simeq A(X)$$
is a weak homotopy equivalence for any space $X$. 
\end{thm}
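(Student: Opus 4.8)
The plan is to bootstrap from the homotopy finite case, which is already settled by Proposition~\ref{A-theory-model}, by writing an arbitrary space as a filtered colimit of finite complexes and exploiting that the unpointed model $X \mapsto \calc^{hf}(X)$ is \emph{finitary} in the space variable, in contrast with $X \mapsto \Rfd(X)$. The bridge between the two sides is the naturality of $\mathrm{J}$.

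First I would reduce to the case that $X$ is a CW complex. Both sides are invariant under weak homotopy equivalences of $X$: for $\Omega\,\mathsf{B}\cob(\Rfd(X), w) \simeq A(X)$ (Corollary~\ref{main2}) this is the homotopy invariance of $A$-theory, while for $\Omega\,\mathsf{B}\cob(\calc^{hf}(X), w)$ one argues as in the proof of Proposition~\ref{finite-model}: a weak equivalence $e \colon X \xrightarrow{\simeq} X'$ induces functors on $\cobcof(-)_n$ satisfying the hypotheses of the Approximation Theorem (the functor is the identity on underlying spaces, hence trivially reflects weak equivalences, and the approximation property follows by lifting the structure maps of the homotopy finite spaces occurring in a diagram along $e$ up to homotopy and rectifying with the mapping cylinder factorization). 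Since $\mathrm{J}$ is compatible with all these comparison maps, it suffices to treat CW complexes $X$, and I would replace a general $X$ by $|\mathrm{Sing}\,X|$.

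Next, assuming $X$ is a CW complex, write $X = \colim_\alpha X_\alpha$ over the filtered poset of its finite subcomplexes; the key point is that both sides commute with this colimit. On the source side, compactness of finite CW complexes shows that every finite complex over $X$ (and every morphism between two such) is supported on some $X_\alpha$, so $\calc^f(X) = \colim_\alpha \calc^f(X_\alpha)$ as an increasing union of full subcategories; since $N_\bullet$, geometric realization and --- for filtered diagrams along cofibrations --- the loop functor all commute with (homotopy) colimits, this gives $\Omega\,\mathsf{B}\cob(\calc^f(X), w) \simeq \colim_\alpha \Omega\,\mathsf{B}\cob(\calc^f(X_\alpha), w)$, which by Proposition~\ref{finite-model} (applied to $X$ and to each $X_\alpha$) is identified with $\colim_\alpha \Omega\,\mathsf{B}\cob(\calc^{hf}(X_\alpha), w)$. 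On the target side, $\Omega\,\mathsf{B}\cob(\Rfd(X), w) \simeq A(X)$ and $A$-theory commutes with filtered colimits of CW complexes (see \cite{Wa}), so $\Omega\,\mathsf{B}\cob(\Rfd(X), w) \simeq \colim_\alpha \Omega\,\mathsf{B}\cob(\Rfd(X_\alpha), w)$. Note that $\Rfd(X)$ itself does \emph{not} commute with this colimit, since the structure retraction involves all of $X$; this is exactly why routing the source side through the $\calc^{hf}$-model is convenient.

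Finally I would assemble the pieces. Each $X_\alpha$ is a finite, hence homotopy finite, complex, so Proposition~\ref{A-theory-model} shows that $\mathrm{J}_{X_\alpha}$ induces a homotopy equivalence $\Omega\,\mathsf{B}\cob(\calc^{hf}(X_\alpha), w) \xrightarrow{\simeq} \Omega\,\mathsf{B}\cob(\Rfd(X_\alpha), w)$. By naturality of $\mathrm{J}$ (and of all comparison maps used above) in the space variable, the map induced by $\mathrm{J}_X$ is identified, under the equivalences of the previous paragraph, with the filtered colimit of the maps induced by the $\mathrm{J}_{X_\alpha}$; being a filtered colimit of homotopy equivalences it is again a homotopy equivalence, which proves the theorem. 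The conceptual content of the argument is the contrast between the finitary behaviour of $\calc^{hf}(-)$ and the non-finitary behaviour of $\Rfd(-)$, mediated by $\mathrm{J}$; the one technically delicate point is the homotopy invariance of the unpointed model $\calc^{hf}(-)$ invoked in the first reduction, which is where the Approximation Theorem enters.
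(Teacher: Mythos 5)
Your overall strategy agrees with the paper's: reduce to CW complexes, then write $X$ as the filtered colimit of its finite subcomplexes $X_\alpha$, route the source side through $\calc^f(-)$ so that the colimit passes through the cobordism construction (invoking Proposition~\ref{finite-model} to go back and forth between $\calc^f$ and $\calc^{hf}$), do the analogous thing on the $A$-theory side via $\mathcal R^f$, and finish by naturality of $\mathrm J$ together with Proposition~\ref{A-theory-model}. The filtered-colimit step and the assembly coincide with the paper's.

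The one genuine problem is in the first reduction, where you diverge from the paper: you propose to prove that $X\mapsto \Omega\,\mathsf B\cob(\calc^{hf}(X),w)$ inverts weak homotopy equivalences by running the Approximation Theorem for $e_*\colon\calc^{hf}(X)\to\calc^{hf}(X')$. App~1 is indeed immediate, but App~2 is not, and your sketch glosses over the real difficulty. Given $A\in\calc^{hf}(X)$, $B\in\calc^{hf}(X')$ and $f\colon A\to B$ over $X'$, the mapping cylinder gives $A\rightarrowtail M_f\xrightarrow{\sim}B$, and (since $M_f$ has the homotopy type of a CW complex and $e$ is a weak equivalence) one can indeed choose $\tilde u\colon M_f\to X$ extending $u$ with $e\tilde u$ homotopic \emph{rel} $A$ to the structure map $M_f\to B\to X'$. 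But App~2 requires a morphism $e_*(A')\to B$ \emph{in} $\calc^{hf}(X')$, i.e.\ one that commutes strictly over $X'$, and the residual homotopy cannot be removed by another mapping-cylinder step: $B\to X'$ is not a fibration, so the homotopy does not transport along it. This is exactly the place where ``weak equivalence'' differs from ``trivial fibration,'' and ``rectifying with the mapping cylinder factorization'' does not resolve it. The paper avoids the strictification problem entirely: it first shows (via a $\times I$ argument) that \eqref{any-X} preserves honest homotopy equivalences, and then treats weak equivalences by applying a \emph{functorial} CW approximation $g_X\colon X^c\to X$ simultaneously to the base and to every space in the cospans, producing a functor $\Phi_n$ whose composites with $(g_X)_*$ are only \emph{naturally weakly equivalent} to identities -- a condition that the classifying space of a $w$-category is insensitive to and that requires no strict lifting (the single subtlety, the comparison of $g_X^c$ with $g_{X^c}$, is handled using the already-proved homotopy-equivalence invariance). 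To repair your route you would either have to restrict first to a cofinal CW subcategory and factor $e$ through a genuine homotopy equivalence, or interpolate a fibration replacement -- which amounts to the paper's CW-approximation argument in another guise.
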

\begin{proof}
The functor $X \mapsto | w \cob(\calc^{hf}(X))_n |$ preserves homotopy equivalences because it sends the endpoint inclusions $i_0, i_1 \colon X \to X \times I$ to homotopic maps - each of these is homotopic to the map induced by the functor
$$(Z, u \colon Z \to X) \mapsto (Z \times I, u \times \mathrm{id} \colon Z \times I \to X \times I).$$
As a consequence, the functor 
\begin{equation} \label{any-X}
X \mapsto \mathsf{B} \cob(\calc^{hf}(X), w \calc^{hf}(X))
\end{equation} 
preserves homotopy equivalences, too. We claim that the functor \eqref{any-X} also preserves weak homotopy equivalences. Fix a functorial CW-approximation $g_X \colon X^c \xrightarrow{\sim} X$. 
Then there is a functor 
$$\Phi_n \colon w \cob(\calc^{hf}(X))_n \to w \cob(\calc^{hf}(X^c))_n$$
which is given by applying the functorial CW-approximation and replacing the maps in the cospans functorially by cofibrations as necessary. The composite functor
$$w \cob(\calc^{hf}(X))_n \xrightarrow{\Phi_n} w \cob(\calc^{hf}(X^c))_n \xrightarrow{g_X{}_*} w \cob(\calc^{hf}(X))_n$$
is weakly equivalent to the identity functor. The other composite functor
\begin{equation} \label{comp-functor} 
w \cob(\calc^{hf}(X^c))_n \xrightarrow{g_X{}_*} w \cob(\calc^{hf}(X))_n \xrightarrow{\Phi_n} w \cob(\calc^{hf}(X^c))_n 
\end{equation}
is described as follows: (i) first, it applies the CW-approximation functor (again), then (ii) it replaces the maps in the cospans by cofibrations as necessary, and lastly, 
(iii) it composes the induced structure map to $(X^c)^c$ with the map $g^c_{X} \colon (X^c)^c \to X^c$. This last map is homotopic to $g_{X^c} \colon (X^c)^c \to X^c$ and therefore 
after applying $| w \cob(\calc^{hf}(-))_n|$, these two maps $g^c_X$ and $g_{X^c}$ induce the same map up to homotopy. Thus, using in (iii) the map $g_{X^c}$ instead, we obtain 
a composite functor which is weakly equivalent to the identity functor and it induces a map homotopic to the one induced by \eqref{comp-functor}. It follows that  \eqref{any-X} sends $g_X$ to a homotopy equivalence. 
As a consequence, the functor \eqref{any-X} preserves weak homotopy equivalences. It is well known that the $A$-theory functor has this property too (see \cite[Proposition 2.1.7]{Wa}).

Then it suffices to prove that $\Omega \mathsf{B} \cob(\mathrm{J}_{X}, w \mathrm{J}_X)$ is a weak equivalence when $X$ is a CW complex. We may write $X$ as the (homotopy) filtered colimit of its 
finite subcomplexes. We note that the composite functor $\cob(\calc^f(-), w \calc^f(-))$, defined on objects by
$$X \mapsto \calc^f(X) \mapsto w \cob(\calc^f(X))_\bullet \mapsto \mathsf{B} \cob(\calc^f(X), w \calc^f(X)),$$ preserves this filtered colimit and therefore, after applying Proposition \ref{finite-model}, 
it follows that the functor \eqref{any-X} preserves this (homotopy) filtered colimit up to weak equivalence. Using the Waldhausen category $\mathcal{R}^f(X)$ for the definition of 
$A$-theory, the analogous argument shows that the same is true for the $A$-theory functor. Then the result follows by naturality and Proposition \ref{A-theory-model}.
\end{proof}

\subsection{The map from the cobordism category of manifolds.} Using the cobordism model for $A$-theory, we give a new description of the map from the cobordism category 
of \cite{GMTW, Ge} to $A$-theory, which was defined in \cite{BM} and studied further in \cite{RS1, RS2}. Here we will focus on the description of the map presented in \cite{RS2}.

\medskip

Let $\calc^{\partial}(\theta)$ denote the cobordism (non-unital) category of $\theta$-structured manifolds with boundary as defined in 
\cite[Section 2]{RS2}, where $\theta \colon X \to BO(d)$ is the fibration which determines the tangential structure. For fixed $\theta \colon X \to BO(d)$, 
there is a semi-simplicial map
$$N_{\bullet} \calc^{\partial}(\theta) \to \mathrm{Ob} \cobcof(\calc^{hf}(X))_{\bullet} \xrightarrow{\mathrm{J}_X} \mathrm{Ob} \cobcof(\Rfd(X))_{\bullet}$$
which sends an embedded $\theta$-structured cobordism $(W; M_0, M_1)$ to the cospan 
\[
\xymatrix{
& W \sqcup X &\\
M_0 \sqcup X \ar@{>->}[ur] & & M_1 \sqcup X \ar@{>->}[ul] \\
}
\]
of retractive spaces over $X$ (cf. \cite[Section 5.1]{RS1}, \cite[Section 5.1]{RS2}). Thus, this map corresponds essentially to an inclusion of cobordisms of compact smooth manifolds into (formal) cobordisms of homotopy finite spaces. We note that this semi-simplicial map preserves the basepoint that is defined by the initial object. After geometric 
realization, we obtain a map of spaces
$$\tau(\theta) \colon \Omega \mathsf{B}\calc^{\partial}(\theta) \to  \Omega\mathsf{B}\cobcof(\calc^{hf}(X), w\calc^{hf}(X)) \xrightarrow{\sim} \Omega\mathsf{B} \cobcof(\Rfd(X),w\Rfd(X)).$$

In this map, the topology on the cobordism category is not yet encoded. This can be rectified, just as in \cite{RS2}, by introducing an additional simplicial direction to obtain the \emph{simplicial thickening} $\calc^{\partial}(\theta)_{\bullet}$, a simplicial object with values in (non-unital) categories 
(see \cite[Section 2]{RS2}). Then the definition of the map above applies similarly in each simplicial 
degree and produces, as in \cite[Section 5.1]{RS2}, a simplicial morphism $\tau(\theta)_{\bullet}$ to the simplicial thickening (or \emph{thick model}) of
the symmetric cobordism category associated to $\Rfd(X)$, 
\[
 [n] \mapsto \Omega \mathsf B \cobcof\left(\Rfd \fibr{X\times \Delta^n}{\Delta^n}, w\Rfd\fibr{X\times \Delta^n}{\Delta^n}\right).
\]
The passage to the (geometric realization of the) simplicial thickening does not change the homotopy type of this cobordism category of retractive spaces. This can be seen either by following the arguments of \cite{RS2}, or by using the equivalence with bivariant $A$-theory and the analogous statement in this setup from \cite{BM}, \cite[Section 3.3]{RS1}. 

\smallskip 

Using the identification of Theorem \ref{main}, the (simplicial thickening of the) map $\tau(\theta)$ is precisely the map to $A$-theory as defined in \cite[Section 5.1]{RS2}.

\smallskip 

\noindent Note that the summand $X$ in the cospan above can be omitted by working instead with the simplicial unpointed Waldhausen category $\calc^{hf}(X)$ in order to model $A(X)$ 
(Theorem \ref{model-A-theory2}).

\end{document}